\newcommand{\jf}[1]{\textcolor{green}{#1}}
\newcommand{\yl}[1]{\textcolor{red}{#1}}
\newcommand{\jfa}[1]{\textcolor{magenta}{#1}}
\newcommand{\off}[1]{}
\begin{document}

\title{Minimizing Quotient Regularization Model%\thanks{Grants or other notes
%about the article that should go on the front page should be
%placed here. General acknowledgments should be placed at the end of the article.}
}
%\subtitle{Do you have a subtitle?\\ If so, write it here}

%\titlerunning{Short form of title}        % if too long for running head

\author{Chao Wang         \and
       Jean-Francois Aujol \and
        Guy Gilboa \and 
        Yifei Lou %etc.
}

%\authorrunning{Short form of author list} % if too long for running head

\institute{C. Wang \at
             Department of Statistics and Data Science, Southern University of Science and Technology, Shenzhen 518005, Guangdong Province, China  \\
             National Centre for Applied Mathematics Shenzhen, Shenzhen 518055, Guangdong Province, China \\
              \email{wangc6@sustech.edu.cn}           %  \\
%             \emph{Present address:} of F. Author  %  if needed
           \and
           J. Aujol \at
University of Bordeaux, Bordeaux INP, CNRS, IMB, UMR 5251, F-33400 Talence, France \\
\email{jean-francois.aujol@math.u-bordeaux.fr}
\and 
G. Gilboa \at 
Technion, Israel Institute of Technology, Haifa, Israel\\
\email{guy.gilboa@ee.technion.ac.il} 
\and 
Y. Lou \at
Mathematics Department, University of North Carolina at Chapel Hill, Chapel Hill, NC, 27599 USA \\
\email{yflou@unc.edu}
}

\date{Received: date / Accepted: date}
% The correct dates will be entered by the editor

\maketitle

\begin{abstract}
 Quotient regularization models (QRMs)  are a class of  powerful regularization techniques that have gained considerable attention in recent years, due to their ability to handle complex and highly nonlinear data sets. However, the nonconvex nature of QRM poses a significant challenge in finding its optimal solution.
We are interested in scenarios where both the numerator and the denominator of QRM are absolutely one-homogeneous functions, which is widely applicable in the fields of signal processing and image processing. 
In this paper, we
utilize a gradient flow to minimize such QRM in combination with a quadratic data fidelity term.  Our scheme involves solving a convex problem iteratively. 
The convergence analysis is conducted on a modified scheme in a continuous formulation, showing the convergence to a stationary point. Numerical experiments 
%on synthetic and real-world datasets 
demonstrate the effectiveness of the proposed algorithm in terms of accuracy, outperforming the state-of-the-art QRM solvers.
\keywords{Quotient regularization \and gradient flow \and fractional programming}
% \PACS{PACS code1 \and PACS code2 \and more}
\subclass{49N45 \and 65K10 \and 90C05 \and 90C26 }
\end{abstract}

\section{Introduction}

In this paper, we consider a generalized quotient regularization model (QRM) with a least-squares data fidelity term  weighted by a positive constant $\lambda$, i.e., 
\begin{equation}
    \label{eq:quotient_model}
    \min_{u\in\Omega} \dfrac {J(u)}{H(u)} + \frac \lambda 2 \|Au-f\|_2^2,
\end{equation}
where both functionals $J(\cdot), H(\cdot)$ are proper, convex, lower semi-continuous (lsc), and absolutely one-homogeneous on a proper domain $\Omega\subset  \mathbb R^n$. An absolutely one homogeneous functional $F: u\in \Omega\rightarrow \mathbb R$ satisfies $F(\alpha u) = |\alpha| F(u), \forall \alpha \in \mathbb R, u\in \Omega.$  
% \yl{[we may need to define $\Omega$ and give dimensions for A and f]} 
This definition implies that $J(u)\geq 0, J(0)=0.$ 
 We further assume by convention $\frac{J(0)}{H(0)}:=0,$ 
% \yl{[in our previous paper \cite{rahimi2019scale}, we defined $L_1/L_2$ at zero is equal to 0, so that $L_1/L_2$ at zero is equal to the L0 norm of a zero vector; should we change 1 to 0?]} \jf{I fear that if we set the ratio in 0 to be 0, then 0 might be a minimizer} 
 thus it is well-defined at $0.$ 
The least-squares misfit between the linear operator $A$ and the measurements $f$ is a standard data fidelity term when the noise $Au-f$ is subject to the Gaussian distribution.  For other noise types, the data fidelity term is formulated differently.
We give three specific signal and image processing examples that fit into our general model \eqref{eq:quotient_model}. 
%We can interpret the data fidelity by solving an inverse problem $Au= f+n$ given the observed data $f$ with unknown $n$. 

\rm{\bf Example 1} {\it ($L_1/L_2$ sparse signal recovery).} The ratio of the $L_1$ and $L_2$ norms  was prompted as a scale-invariant surrogate to the $L_0$ norm for sparse signal recovery \cite{hoyer2004non,hurley2009comparing}. Defining $J(0)/H(0)=0$ aligns with the $L_0$ norm of the zero vector. 
Recently, a constrained minimization problem was formulated, i.e., 
\[
\min_{u\in\mathbb R^{n}} \frac{\|u\|_1}{\|u\|_2} \quad \mbox{s.t.} \quad Au=f,
\]
for the ease of analyzing the theoretical properties of the $L_1/L_2$ model \cite{rahimi2019scale,xu2021analysis} as well as deriving a numerical algorithm \cite{wang2020accelerated}. Here we adopt the unconstrained formulation \cite{tao2022minimization} that is aligned with our generalized model \eqref{eq:quotient_model}
\begin{equation}\label{eq:l1/l2}
    \min_{u\in\mathbb R^{n}} \frac{\|u\|_1}{\|u\|_2} + \frac{\lambda}{2}\|Au -f \|_2^2.
\end{equation}
A more general ratio of $L_p$ over $L_q$ (quasi-)norms for $p\in (0,2)$ and $q\geq 2$ was explored in  \cite{cherni2020spoq}.

\begin{figure}[t]
		\begin{center}
			\begin{tabular}{cc}
			   $L_1$ &   $L_0$ \\
			   % $r=0.1$  & $r=0.8$ \\
 				\includegraphics[width=0.42\textwidth]{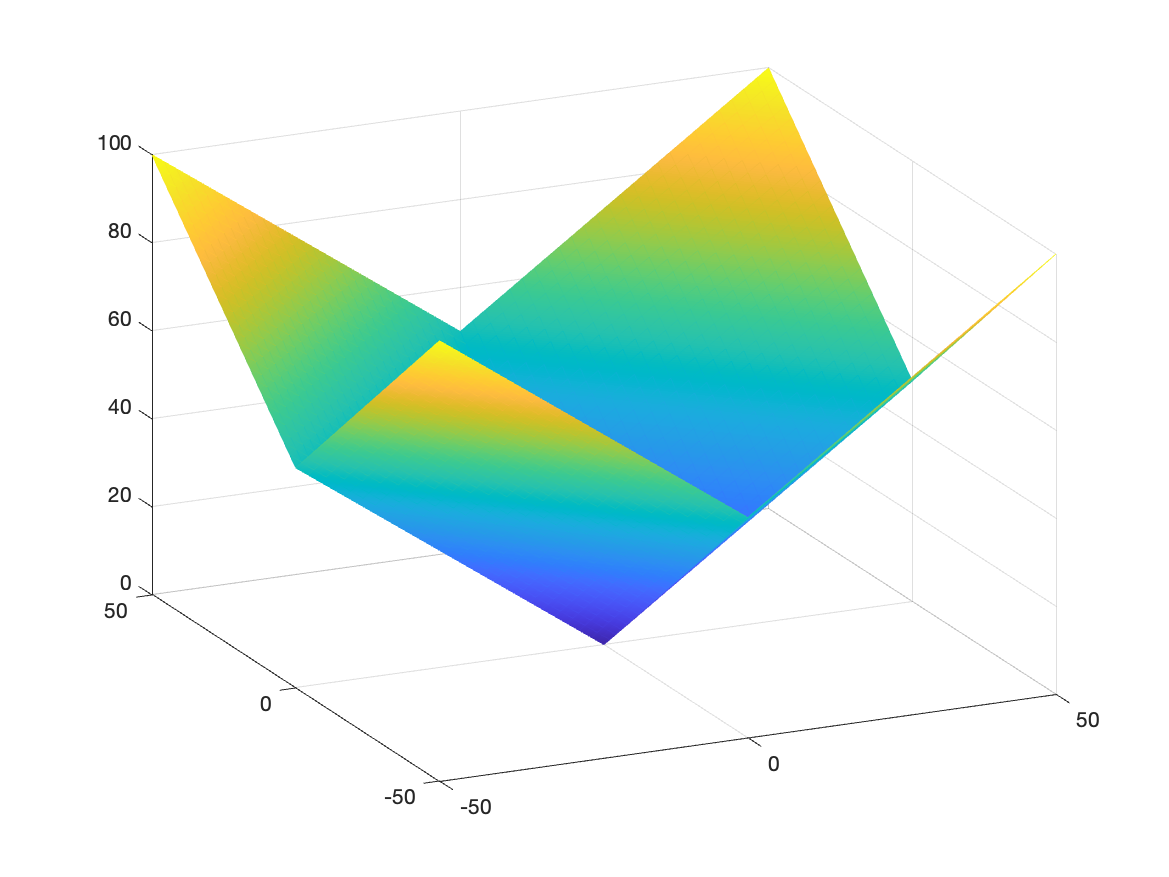} &
			    \includegraphics[width=0.42\textwidth]{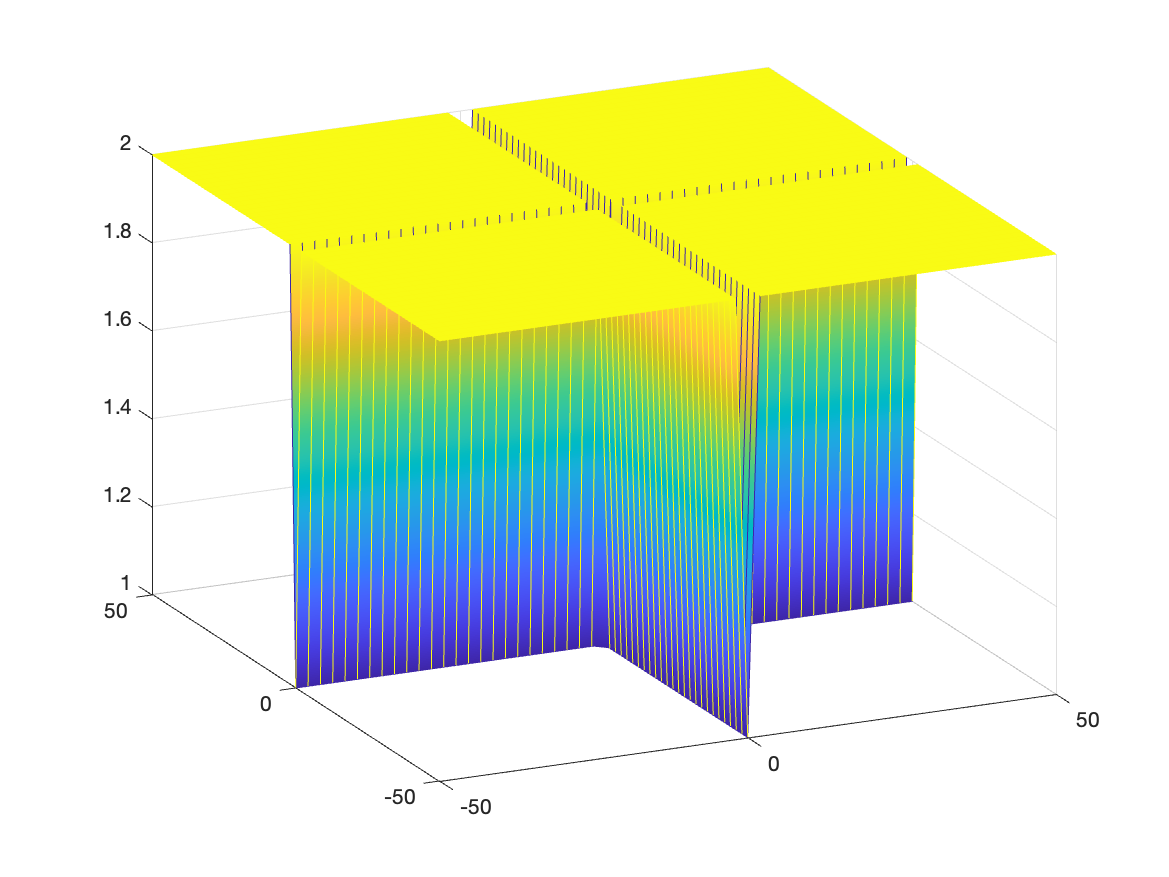}  \\
       $L_1/L_2$ &   $L_1/S_1$ \\
       \includegraphics[width=0.42\textwidth]{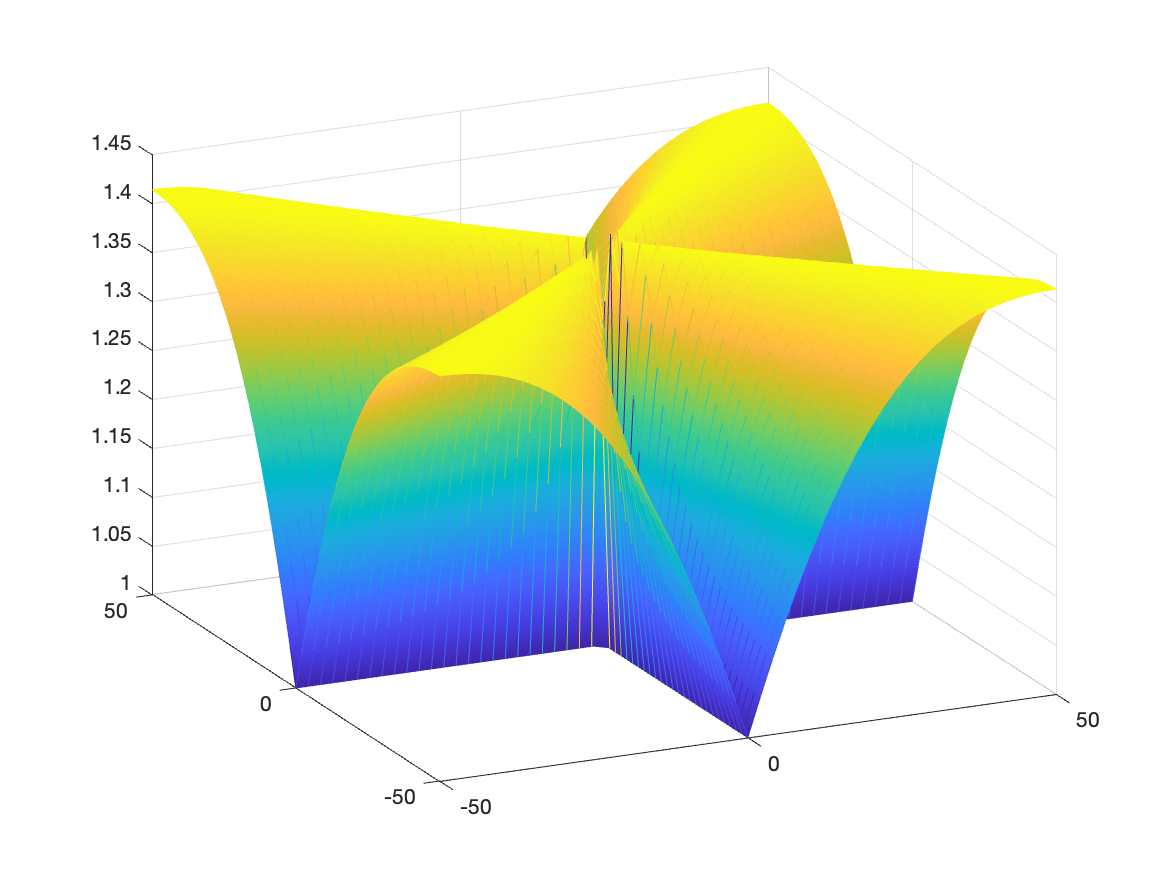} &		    \includegraphics[width=0.42\textwidth]{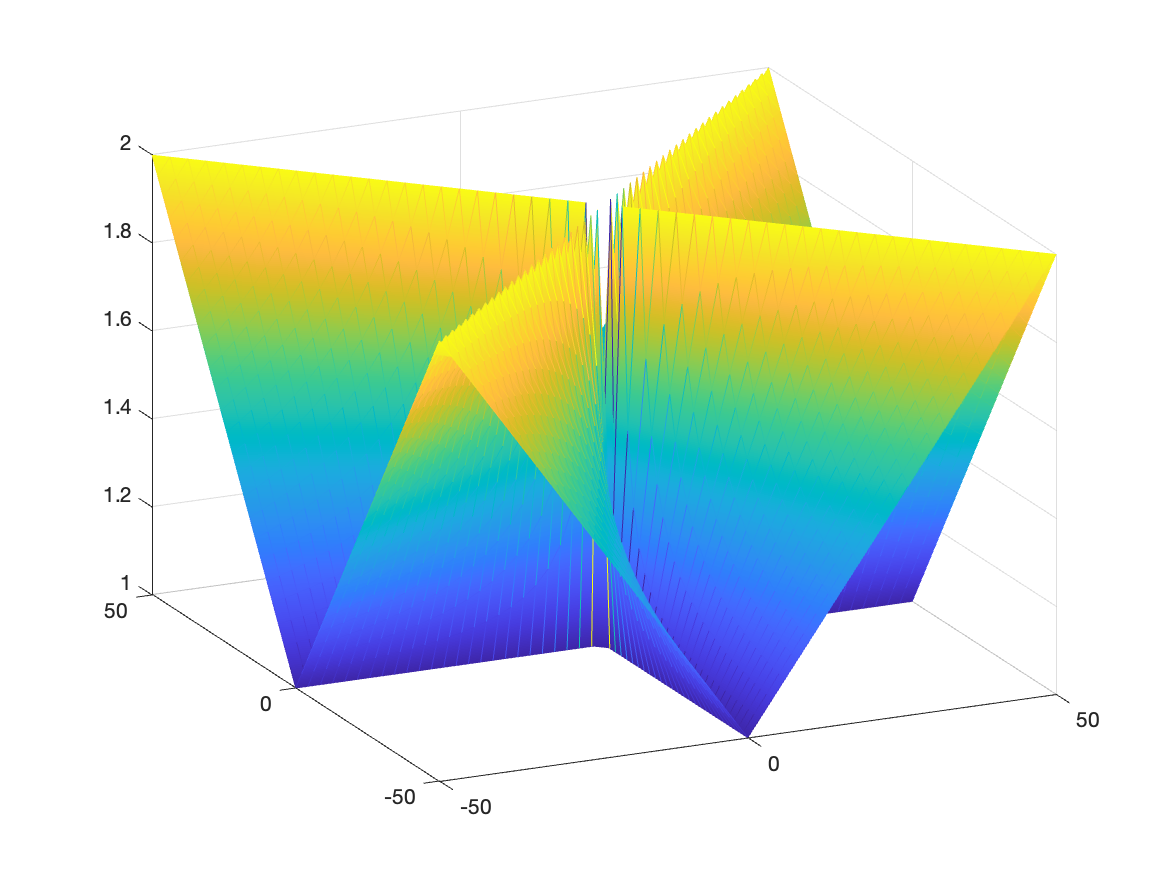}
			\end{tabular}
		\caption{A 2D illustration of $L_1/L_2$ and $L_1/S_1$ that give a better approximation to the $L_0$ norm with a comparison to the convex $L_1$ norm. 
		}\label{fig:profile}
  \end{center}
	\end{figure}

 \rm{\bf Example 2} {\it ($L_1/S_K$  sparse signal recovery).} Motivated by the truncated $L_1$ regularization (a.k.a partial sum) \cite{hu2012fast,oh2015partial} and the $L_1/L_2$ model,
 Li et al.~\cite{li2022proximal} proposed the ratio of 
the $L_1$ norm and $K$-largest sum as a sparsity-promoting regularization with a given integer $K$. When $K=1,$ it becomes the  $L_1$ norm over the infinity norm \cite{demanet2014scaling,wang2022wonderful}. For $K=n$ (the ambient dimension of $u$), $L_1/S_K$ is equivalent to $L_1/L_2$.  In Figure~\ref{fig:profile}, we use a 2D example to illustrate that
 both $L_1/L_2$ and $L_1/S_K$  can promote sparsity by approximating the $L_0$ norm.  Both ratios give a better approximation to the $L_0$ norm compared to the convex $L_1$ norm, which is largely attributed to the scale-invariant property of the $L_0$ norm and the two ratio models. 
 
Define $J(u)=\|u\|_1$  and $H(u)$ as the sum of the $K$-largest absolute values of entries, 
  denoted  as $\|u\|_{(K)}$. As both $J(\cdot)$ and $H(\cdot)$ are absolutely one-homogeneous, we consider the following problem 
\begin{equation}\label{eq:l1/Sk}
    \min_{u\in\mathbb R^{n}} \frac{\|u\|_1}{\|u\|_{(K)}} + \frac{\lambda}{2}\|Au -f \|_2^2,
\end{equation}
as a special case of \eqref{eq:quotient_model}. Note that 
the $L_1/S_K$ regularization was formulated in 
 \cite{li2022proximal} as
\begin{equation}\label{eq:l1/Sk-frac}
    \min_{u\in\mathbb R^{n}} \dfrac{\|u\|_1+\frac{\lambda}{2}\|Au -f \|_2^2}{\|u\|_{(K)}},
\end{equation}
so that a fractional programming (FP) strategy \cite{zhang2022first} can be applied. We demonstrate in our experiments that \eqref{eq:l1/Sk} outperforms \eqref{eq:l1/Sk-frac} in terms of sparse recovery.

 \rm{\bf Example 3} {\it ($L_1/L_2$ on the gradient for  image recovery).} In \cite{wang2022minimizing,wang2021limited}, the $L_1/L_2$ functional was applied to the image gradient and combined with the least-squares term,
\begin{equation}\label{eq:l12-grad}
    \min_u \frac{\|\nabla u\|_1}{\|\nabla u\|_{2}} + \frac{\lambda}{2}\|Au -f \|_2^2.
\end{equation}
Specifically, Wang et al.~\cite{wang2021limited} demonstrated that this model \eqref{eq:l12-grad} yields significant improvements in a limited-angle CT reconstruction problem. With an additional $H^1$-semi norm to \eqref{eq:l12-grad} for smoothing, a  segmentation model was proposed in \cite{wu2022efficient}. A  modification of replacing the gradient operator $\nabla$ in \eqref{eq:l12-grad} by a nonnegative diagonal matrix was explored in  \cite{lei2022physics} for electrical capacitance tomography.
 
%To the best of our knowledge, the formulations of \eqref{eq:l1/l2} and \eqref{eq:l1/Sk} for , respectively, have not been addressed in the literature \yl{[check]}.

 %As for \eqref{eq:l12-grad}, 
%we will present a more efficient algorithm that leads to solutions with higher accuracy \yl{[is it so?]} 

Without the data fitting term, our model \eqref{eq:quotient_model} reduces to Rayleigh quotient problems, defined by
\begin{equation}\label{eq:rayleigh-quotient}
    \min_{u\in \Omega} R(u) :=\dfrac{J(u)}{H(u)}.
\end{equation}
The classic Rayleigh quotient problem in linear eigenvalue analysis \cite{horn2012matrix} is defined by
\begin{equation}\label{eq:eigenRayleigh}
    \min_{u\in \mathbb R^n}\dfrac {\langle u, Lu\rangle}{\|u\|_2^2},
\end{equation}
with a symmetric matrix $L\in \mathbb R^{n\times n}$. Any critical point of \eqref{eq:eigenRayleigh} is an eigenvector of the matrix $L.$ One can replace the linear mapping $Lu$ in \eqref{eq:eigenRayleigh} by a nonlinear function, thus leading to a nonlinear eigenproblem. Nossek and Gilboa \cite{nossek2018flows} proposed a continuous  flow that minimizes \eqref{eq:rayleigh-quotient} when $J(\cdot)$ is absolutely one homogeneous and $H(\cdot)$ is the square $L_2$ norm. The convergence proof was later provided in \cite{aujol2018theoretical}. Under the same setting, a nonlinear power method was proposed in \cite{bungert2021nonlinear} with connections to proximal operators and neural networks. 
 For the case when $J$ is the total variation (TV) and $H$ is the $L_1$ norm, the Rayleigh quotient 
\eqref{eq:rayleigh-quotient}  approximates the Cheeger cut problem \cite{hein2010inverse,bresson2012convergence}.  
 The quotient minimization \eqref{eq:rayleigh-quotient} also appears in learning parameterized regularizations \cite{benning2016learning} and filter functions \cite{benning2017learning}. 

%  {\tt review fractional programming}
% A class of single-ratio fractional programming aims at the minimization in the form of 
% \begin{equation}
%     \min \left\{\dfrac{J(u)+\lambda L(u)}{H(u)}\right\}.
% \end{equation}
% This completely differs from our problem setting \eqref{eq:quotient_model}.

In this paper, we propose a novel scheme to minimize the general model \eqref{eq:quotient_model} based on
a gradient descent flow  for the Rayleigh quotient minimization \cite{feld2019rayleigh}. We then apply the proposed algorithm to the three specific examples ($L_1/L_2,$ $L_1/S_K,$ and $L_1/L_2$ on the gradient). In each case, our algorithm requires minimizing an $L_1$-regularized subproblem, which can be solved efficiently using the alternating direction method of the multiplier (ADMM) \cite{boyd2011distributed,gabay1976dual}. 
Our analysis for the proposed algorithm is towards a slightly modified scheme. We establish a subsequential convergence of the modified scheme under the uniform boundedness of the sequence. With some additional assumptions,  the uniform bound can be proven using a continuous flow formulation. In experiments, we demonstrate the efficiency of the proposed algorithm over the relevant methods in the literature. In summary, the novelties of this paper are threefold:
 \begin{enumerate}
     \item We consider a general model \eqref{eq:quotient_model} that combines the Rayleigh quotient as a regularization with a data fidelity term. Our model has a variety of applications, especially in signal and image reconstruction. 
     \item We propose a unified algorithm with numerical insights on convergence and the solution's boundedness. 
\item Our approach can be adapted to three case studies: \eqref{eq:l1/l2}, \eqref{eq:l1/Sk}, and \eqref{eq:l12-grad}. In each case, the proposed scheme outperforms the relevant algorithms in the literature in terms of accuracy.
 \end{enumerate}

 The rest of the paper is organized as follows. Section~\ref{sect:algorithm} describes the proposed algorithms in detail, including numerical formulation and specific closed-form solutions for the three case studies. We provide mathematical analysis on the numerical scheme in Section~\ref{sect:analysis}. Extensive experiments are conducted in Section~\ref{sect:experiments} for applications  in signal and image recovery. Finally, conclusions and future works are given in Section~\ref{sect:conclusion}.

\section{Proposed algorithms}\label{sect:algorithm}
Recall that we aim at the minimization problem 
\begin{equation}\label{eq:general-obj}
    \min_u G(u) := R(u) + \frac{\lambda}{2}\|Au -f \|_2^2,
\end{equation}
with $R(u) = J(u)/H(u).$

\begin{theorem} \label{thm:u-neq0}
    Suppose $A$ is an under-determined matrix, $f \in \mathrm{Im}(A)$, and $R(\cdot)$ has an upper bound, i.e., $R(u)\leq M$. For a sufficiently large parameter $\lambda$,  the optimal solution of (\ref{eq:general-obj}) can not be ${0}.$
\end{theorem}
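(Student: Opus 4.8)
The plan is to compare the objective value at $u=0$ with the objective value at some cleverly chosen competitor, and show that for large $\lambda$ the competitor wins. At $u=0$ we have $G(0) = R(0) + \frac{\lambda}{2}\|f\|_2^2 = \frac{\lambda}{2}\|f\|_2^2$, using the convention $R(0)=0$ and the fact that, since $A$ is under-determined and $f\in\mathrm{Im}(A)$, there exists $\bar u$ with $A\bar u = f$. Note $f\neq 0$ is needed for the argument to have content; if $f=0$ then $0$ may well be optimal, so implicitly we are in the case $f\neq 0$ (otherwise the statement is about $f\in\mathrm{Im}(A)\setminus\{0\}$, which I would state explicitly). Taking $u=\bar u$ gives $G(\bar u) = R(\bar u) + 0 \le M$ by the assumed upper bound on $R$.

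The key step is then the inequality $G(\bar u) \le M < \frac{\lambda}{2}\|f\|_2^2 = G(0)$, which holds as soon as $\lambda > 2M/\|f\|_2^2$. Since $\bar u$ is a feasible point with strictly smaller objective value than $0$, the point $0$ cannot be a minimizer of \eqref{eq:general-obj}. This is the whole argument; it is short because all the work is hidden in the hypotheses (existence of $\bar u$ from under-determinacy plus $f\in\mathrm{Im}(A)$, and the uniform bound $M$ on $R$).

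One subtlety I would address carefully: the bound $R(u)\le M$ must be understood to hold for $u=\bar u$ in particular — if $R$ is only bounded on part of the domain, one needs $\bar u$ in that part, but since we may pick \emph{any} solution of $A\bar u=f$ and the solution set is an affine subspace of positive dimension, and since $R$ is absolutely one-homogeneous (hence constant along rays, so bounded on the ray through $\bar u$ iff bounded at $\bar u$), the hypothesis ``$R(\cdot)$ has an upper bound $M$'' is most naturally read globally and causes no trouble. The main (and only real) obstacle is therefore just making sure the hypotheses are invoked in a logically airtight order: first extract $\bar u$ with $A\bar u=f$, then evaluate $G(\bar u)\le M$, then evaluate $G(0)=\frac{\lambda}{2}\|f\|_2^2$, then choose $\lambda$ large enough, and conclude that $0$ is not optimal. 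I would also remark that ``sufficiently large'' can be made explicit as $\lambda > 2M/\|f\|_2^2$, which is a nice quantitative strengthening worth stating.
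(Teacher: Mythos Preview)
Your argument is correct and matches the paper's own proof essentially line for line: pick a point $\bar u$ with $A\bar u=f$, compare $G(\bar u)\le M$ against $G(0)=\frac{\lambda}{2}\|f\|_2^2$, and conclude for $\lambda>2M/\|f\|_2^2$. The only cosmetic difference is that the paper singles out the least-norm solution $\hat u$, which plays no role in the estimate; your observation that $f\neq 0$ is implicitly required is a useful addition.
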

\begin{proof}
As $A$ is an under-determined matrix and $f \in \mathrm{Im}(A)$, there exist infinitely many solutions satisfying $Au=f,$ among which we 
    denote $\hat u$ to be the least norm solution, that is,
    \begin{eqnarray*}
        \hat u = \arg\min_u \|u\|_2 \quad \mbox{such that} \quad Au=f.
    \end{eqnarray*}
    It is straightforward that $ G(\hat u) = R(\hat u)\leq M$ and $G(0) = \frac{J(0)}{H(0)}+\frac{\lambda} 2\|f\|_2^2$. If $\lambda > \frac{2M}{\|f\|_2^2},$ then
   we have $G(\hat u)< G(0),$ which implies that 0 cannot be the global solution to \eqref{eq:general-obj}. 
   %{\color{violet} [it might be easier to define $J(0)/H(0)=0,$ but it also works when $J(0)/H(0)=c$ for any constant, but we'd better define such constant c.]}
\end{proof}
{\bf Remark: } Note that all the examples listed in the introduction section satisfy the boundedness assumption of $R(\cdot)$. Taking $L_1/L_2$ for an example, one has $\frac{\|u\|_1}{\|u\|_2}\leq \sqrt n$ for $u\in \mathbb R^n.$

\medskip

One  classic method to minimize $G(u)$ is by using a gradient descent flow, i.e., 
\begin{equation}
    \label{flow}
    u_t = -\nabla G(u).
\end{equation}
The derivative of $G$ can be expressed as
\begin{equation} \label{eq_nablaG}
\begin{split}
        \nabla G(u) &= \frac{H(u)p-J(u)q}{H^2(u)} + \lambda A^T(Ax-f)\\
        & =  \frac{p-R(u)q}{H(u)} + \lambda A^T(Au-f), 
\end{split}
\end{equation}
where $q\in \partial H(u), p \in \partial J(u).$ We consider the subgradient $\partial$ here as $J(\cdot), H(\cdot)$ are not necessarily differentiable. 
Plugging the gradient expression \eqref{eq_nablaG} into the flow \eqref{flow} yields 
\[
u_t = \frac{R(u)}{H(u)}q- \frac{p}{H(u)} - \lambda A^T(Au-f),
\]
% \begin{equation}
%     H(u) \cdot u_t =  R(u)q-p -\lambda H(u) A^T(Au-f),
% \end{equation}
which can be discretized by the iteration count $k$,
\begin{equation}\label{eq:discrete-scheme}
     \frac{u^{k+1}-u^{k}}{dt} = \frac{R(u^k)}{H(u^k)}q^k-\frac{p^{k+1}}{H(u^k)} -\lambda A^T(Au^{k+1}-f). 
\end{equation}
Note that we consider a semi-implicit  scheme in \eqref{eq:discrete-scheme} such that the update of $u^{k+1}$ is obtained by the following optimization problem,
\begin{equation}
    \label{eq:minimization_flow}
    u^{k+1} = \arg\min_u \left\{ \frac{\beta}{2}\|u- u^k\|_2^2-\frac{R(u^k)}{H(u^k)}\langle q^k, u\rangle+\frac{J(u)}{H(u^k)}+\frac{\lambda}{2}\|Au-f\|_2^2\right\},
\end{equation}
where $\beta = \frac{1}{dt}.$ 
In what follows, we describe the detailed algorithms for $L_1/L_2$ and $L_1/S_K$ in Section~\ref{sect:signal} as well as the gradient model \eqref{eq:l12-grad} in Section~\ref{sect:image}, all based on the general scheme \eqref{eq:minimization_flow}.

% and $\mu^k = \lambda H(u^k)$. 
% We will empirically show that these two parameters can be fixed through all the iterations. \yl{[may want to add some remarks here about fixed beta/mu or adaptive.]}
% Define $F(u):=\frac{\beta}{2}\|u- u^k\|_2^2-\frac{R(u^k)}{H(u^k)}\langle q^k, u\rangle+\frac{J(u)}{H(u^k)}+\frac{\lambda}{2}\|Au-f\|_2^2,$ 
%%%%%%
\off{
%%%%%
\yl{We establish in Theorem \ref{thm:u-neq0} that the minimization problem \eqref{eq:minimization_flow} has a unique solution that can not be the zero vector if not starting from the zero initially.
\begin{theorem}\label{thm:u-neq0}
Suppose $J(\cdot), H(\cdot)$ are proper, convex, lsc, and absolutely one-homogeneous functionals.  If $\lambda \leq \|A\|_2^2\beta$ and $u^k\neq 0,$ then $0$ can not be a solution to \eqref{eq:minimization_flow}.    
\end{theorem}
\begin{proof} Define $$F(u):=\frac{\beta}{2}\|u- u^k\|_2^2-\frac{R(u^k)}{H(u^k)}\langle q^k, u\rangle+\frac{J(u)}{H(u^k)}+\frac{\lambda}{2}\|Au-f\|_2^2.$$
For a convex functional $J(\cdot),$ it  is straightforward that $F$ is strongly convex with parameter $\beta>0,$ thus the solution to \eqref{eq:minimization_flow} is unique.
     Simple calculations lead to 
    \begin{eqnarray*}
        F(u^k) &=& -\frac{R(u^k)}{H(u^k)}\langle q^k, u^k\rangle+\frac{J(u^k)}{H(u^k)}+\frac{\lambda}{2}\|Au^k-f\|_2^2\\
        &=&-R(u^k)+\frac{J(u^k)}{H(u^k)} +\frac{\lambda}{2}\|Au^k-f\|_2^2 \\
        &\leq&  \frac{\lambda} 2 \|Au^k\|_2^2 + \frac{\lambda} 2 \|f\|_2^2,
    \end{eqnarray*}
    where we use the property $\langle q^k, u^k\rangle = H(u^k)$ {\color{violet}[maybe we should move Lemma 1 before this theorem]} and triangle inequality.
    \jf{No, it is not the triangle inequality since there are squares. You have $\|A u^k -f\|^2 = \|A u^k\|^2 + \|f\|^2 - 2 \langle A u^k, f\rangle$, and then you need to control the scalar product.}
    On the other hand, we use the assumption on $\beta, \lambda$ to estimate 
    \begin{eqnarray*}
        F(0) = \frac{\beta} 2 \|u^k\|_2^2+ \frac{\lambda} 2 \|f\|_2^2 \geq \frac{\lambda} 2 \|A\|_2^2 \|u^k\|_2 + \frac{\lambda} 2 \|f\|_2^2 = F(u^k).
    \end{eqnarray*}
   As $u^k\neq 0$ by assumption and the minimizer of $F(u)$ is unique, we conclude that $0$ cannot be the solution to \eqref{eq:minimization_flow}.  
\end{proof}
}
%%%%%
}

\subsection{Quotient regularization  for sparse signal recovery} \label{sect:signal}
For $H(u) = \|u\|_2$ and $q\in\partial H(u)$, we get $q = \frac{u}{\|u\|_2}$ if $u\neq 0;$ otherwise $q$ is a vector with each element bounded by $[-1,1].$ 
As $J(u) = \|u\|_1,$
the minimization problem \eqref{eq:minimization_flow} at the $k$th iteration becomes 
\begin{equation}
\label{eq:signal_quotient}
    u^{k+1} = \arg\min_u \left\{ \frac{\beta}{2}\|u- u^k\|_2^2-\langle h^k, u\rangle+\frac{\|u\|_1}{\|u^k\|_2}+\frac{\lambda}{2}\|Au-f\|_2^2\right\},
\end{equation}
where $h^k=\frac{R(u^k)}{H(u^k)} q^k=\frac{\|u^k\|_1}{\|u^k\|_2^3 }u^k$.  Note that the scheme \eqref{eq:discrete-scheme} becomes degenerate if $u^k=0$,  while this turns out not to be restrictive in, as $u^k=0$ never occurs in 
our experiments. On the theoretical side,  we know from Theorem \ref{thm:u-neq0} that $0$ cannot be the minimizer of the objective function in the minimization problem \eqref{eq:minimization_flow}.

To solve for the $L_1$-regularized minimization  \eqref{eq:signal_quotient}, we introduce an auxiliary variable $y$ and consider an equivalent problem 
\begin{equation}
   \min_{u,y} \frac{\beta}{2}\|y- u^k\|_2^2-\langle h^k, y\rangle+\frac{\|u\|_1}{\|u^k\|_2}+\frac{\lambda}{2}\|Ay-f\|_2^2  \quad \mbox{s.t.} \quad u=y.
\end{equation}
The corresponding augmented Lagrangian function is expressed as,
	\begin{equation}\label{eq:augL4con}
	\mathcal{L}_k(u,y;\eta) =\frac{\beta}{2}\|y- u^k\|_2^2-\langle h^k, y\rangle+\frac{\|u\|_1}{\|u^k\|_2}+\frac{\lambda}{2}\|Ay-f\|_2^2+\frac{\rho}{2}\|u-y+\eta\|_2^2,
	\end{equation}
	where  $\eta$ is a dual variable and $\rho$ is a positive parameter. 
	Then  ADMM iterates as follows
	\begin{equation} \label{ADMM_signal}
	\left\{\begin{array}{l}
 u_{j+1}=\arg\min_{u} \mathcal{L}_k (u,y_j;\eta_j)\\
	y_{j+1}=\arg\min_{y} \mathcal{L}_k (u_{j+1},y;\eta_j)\\
	\eta_{j+1} =  \eta_j + u_{j+1}-y_{j+1},
	\end{array}\right.
	\end{equation}
 where the subscript $j$ represents the inner loop index, as opposed to
	the superscript $k$ for outer iterations \eqref{eq:minimization_flow}.
	The $u$-subproblem has a closed-form solution:
 \begin{equation*}
     u_{j+1}=\mathrm{shrink}\left(y_j-\eta_j,\ \frac{1}{\rho \|u^k\|_2}\right).
 \end{equation*}
The update of $y$ follows the computation of gradient of $\mathcal{L}_k$ with respect to $y$: 
\begin{equation}\label{eq:y-update}
    y_{j+1} = (\lambda A^T A +(\beta+\rho) I )^{-1} (\beta u^k+h^k+\lambda A^Tf+\rho(u_{j+1}+\eta_j)),
\end{equation}
which involves solving a large linear system. 
In the case of sparse signal recovery when the system matrix $A\in\mathbb{R}^{m\times n}$ is under-determined, i.e., $m\ll n$, the closed-form solution of $y$ can be written in an efficient way by the Sherman–Morrison–Woodbury formula:
\begin{equation*}
    y_{j+1} = \left[\kappa I -  \lambda\kappa^2A^T \left(I+ \lambda\kappa AA^T\right)^{-1}A \right]  \left[\beta u^k+h^k+ \lambda A^Tf+\rho(u_{j+1}+\eta_j)\right]. 
\end{equation*}
% \begin{equation*}
% \begin{split}
%     y_{j+1} =& \left[\frac{\lambda}{\beta +\rho}I -  (\frac{\lambda}{\beta +\rho})^2A^T \left(I+ \frac{\lambda}{\beta +\rho} AA^T\right)^{-1}A \right] \\
%     &\quad\quad \cdot \left(\frac{\beta}{\lambda} u^k+\frac{h^k}{\lambda}+ A^Tf+\frac{\rho}{\lambda}(u_{j+1}+\eta_j)\right). 
% \end{split}
% \end{equation*}
where $\kappa = 1/(\beta+\rho)$ and  the matrix $I+ \lambda \kappa AA^T$ is in $m$-by-$m$ size, which is much smaller than inverting an $n\times n$ matrix in \eqref{eq:y-update}. Using the Choleskey decomposition for $I+ \lambda \kappa AA^T$ can further accelerate the computation.

For the $L_1/S_K$ model \eqref{eq:l1/Sk}, $H(u) = \|u\|_{(K)}$ and its subgradient is a random vector bounded by $[-1,1]$ if $u=0.$ In addition, when $u\neq 0,$ one has 
\[
q_i = 
    \begin{cases}
       \frac {u_i}{\|u\|_{(K)}} &  i \in \Omega_K(u) \\
        0 & \text{Otherwise},
    \end{cases}
\]
where $q\in \partial H(u)$ and $\Omega_K(u)$ is the index set of the $K$-largest absolute values of $u$.
As a result, the algorithm for the $L_1/S_K$ model \eqref{eq:l1/Sk}
is the same as \eqref{eq:signal_quotient} except that $h^k = \frac{\|u^k\|_1}{\|u^k\|^3_{(k)}}v^k$ with
\begin{equation}
v^k_i = 
    \begin{cases}
       u^k_i &  i \in \Omega_K(u^k) \\
        0 & \text{Otherwise}.
    \end{cases}
\end{equation}

Algorithm~\ref{alg:unified_signal} presents a unified scheme that minimizes the $L_1/L_2$ and $L_1/S_K$ models with the least-squares fit.

\begin{algorithm}[t]
		\caption{Proposed algorithm for the models of $L_1/L_2$  and $L_1/S_K$. }
		\label{alg:unified_signal}
		\begin{algorithmic}[1]
			\STATE{Input: a linear operator $A$, observed data $f$}
			\STATE{Parameters: $\rho, \lambda, \beta$, $\kappa = 1/(\beta +\rho),$ kMax, jMax,   $\epsilon \in \mathds{R}$, and $K$ for the $L_1/S_K$ model}
			\STATE{Initialize: $ \eta =  0, k,j = 0$ and $u^0$}
			
			\WHILE{$k < $ kMax or $\| u^{k}- u^{k-1}\|_2/\| u^{k}\|_2 > \epsilon$}
			
			\WHILE{$j < $ jMax or $\| u_{j}-u_{j-1}\|_2/\| u_{j}\|_2 > \epsilon$}
			% \STATE{$\h u_{j+1} = (\lambda A^TA+(\rho+\gamma)D^TD + \beta I)^{-1}(\lambda A^T (\h b-\h z_j) + \gamma D^T(\h d_j -\h y_{j})  $  \\ $ \quad \quad \quad  +\rho D^T(\h h^{(k)}-\h g^{(k)})+\beta (\h v_j-\h w_j))$
			% }
			\STATE{$   u_{j+1}=\mathrm{shrink}\left(y_j-\eta_j,\ \frac{1}{\rho \|u^k\|_2}\right)$}
			\STATE{
   $
			   % y_{j+1} = \left[\frac{\lambda}{\beta +\rho}I -  (\frac{\lambda}{\beta +\rho})^2A^T \left(I+ \frac{\lambda}{\beta +\rho} AA^T\right)^{-1}A \right] \left(\frac{\beta}{\lambda} u^k+\frac{h^k}{\lambda}+ A^Tf+\frac{\rho}{\lambda}(u_{j+1}+\eta_j)\right) 
			y_{j+1} = \left[\kappa I -  \lambda\kappa^2A^T \left(I+ \lambda\kappa AA^T\right)^{-1}A \right]  \left[\beta u^k+h^k+ \lambda A^Tf+\rho(u_{j+1}+\eta_j)\right]$
   }
			\STATE{$\eta_{j+1} =  \eta_j + u_{j+1}-y_{j+1} $}
% \STATE{$\h z_{j+1} = \h z_{j} +  A \h u_{j+1}- \h b$ }
		 %$(\text{ only for the } L_1/L_2\text{-box})$$
% 			\STATE{$\h e_{j+1} = \h e_{j} + \h u_{j+1} - \h v_{j+1}$}
			\STATE{Assign $j$ by $j + 1$}
			\ENDWHILE
			\STATE{ Set $u^{k+1}$ as $ u_j$}
% 			\STATE{$\h h^{(k+1)} =
% 				\begin{cases}
% 				\tau^{(k)} \left( D \h u^{(k+1)} + \h g^{(k)} \right) & D \h u^{(k+1)} + \h g^{(k)} \neq 0\\
% 				\h r^{(k)}	& \text{otherwise}
% 				\end{cases} $}
\STATE{Update $ h^{k+1}$ by 
$
    h^{k+1}=\begin{cases}
        \frac{\|u^k\|_1}{\|u^k\|_2^3 }u^k & \text{ for } L_1/L_2 \\
        \frac{\|u^k\|_1}{\|u^k\|^3_{(k)}}v^k & \text{ for } L_1/S_K
    \end{cases}
$
}
			% \STATE $\h g^{(k+1)} = \h  g^{(k)} + D\h u^{(k+1)} - \h h^{(k+1)} $
			%			\STATE{$w^{(k+1)} = w^{(k)} + f - A u^{(k+1)}\quad \quad $(for $L_1/L_2\text{-con}$ only) }
			\STATE{Assign $k$ and $j$ by $k+1$ and 0, respectively}
			\ENDWHILE
			\RETURN $ u^\ast =  u^{k}$ \end{algorithmic}
	\end{algorithm}

\subsection{Quotient regularization  for image recovery}\label{sect:image}
When $J(u) = \|\nabla u\|_1$ and $H(u) = \|\nabla u\|_2$, we get $q = \frac{-\Delta u}{\|\nabla u\|_2}$ if $\nabla u \neq 0;$ otherwise $q$ is a vector with each element bounded by $[-1,1].$ Hence the minimization problem \eqref{eq:minimization_flow} in the $k$-iteration becomes 
\begin{equation}
\label{eq:image_quotient} 
    u^{k+1} = \arg\min_u \left\{ \frac{\beta}{2}\|u- u^k\|_2^2-\langle h^k, u\rangle+\frac{\|\nabla u\|_1}{\|\nabla u^k\|_2}+\frac{\lambda}{2}\|Au-f\|_2^2\right\},
\end{equation}
where $h^k = \frac{\|\nabla u^k\|_1}{\|\nabla u^k\|_2^3}\Delta u^k$. 
% \yl{[maybe we should change p to something else; otherwise could be confused with subgradient of J]} 
The subproblem \eqref{eq:image_quotient} is a TV regularization with additional linear and least-squares terms, which can be solved by  ADMM. In particular, we introduce one auxiliary variable $y=\nabla u$ upon convergence, and formulate the  augmented Lagrangian function corresponding to \eqref{eq:signal_quotient} as,
	\begin{equation}\label{eq:aug}
	\mathcal{L}_k(u, y;\eta) =\frac{\beta}{2}\|u- u^k\|_2^2-\langle h^k, u\rangle+\frac{\|y\|_1}{\|\nabla u^k\|_2}+\frac{\lambda}{2}\|Au-f\|_2^2+\frac{\rho}{2}\|\nabla u-y+\eta\|_2^2,
	\end{equation}
	where $\eta$ is a dual variable and $\rho$ is a positive parameter. 
Then  ADMM iterates as follows
	\begin{equation} \label{ADMM_image}
	\left\{\begin{array}{l}
 u_{j+1}=\arg\min_{u} \mathcal{L}_k (u,y_j;\eta_j)\\
	y_{j+1}=\arg\min_{y} \mathcal{L}_k (u_{j+1},y;\eta_j)\\
	\eta_{j+1} =  \eta_j + \nabla u_{j+1}-y_{j+1}.
	\end{array}\right.
	\end{equation}
Taking the derivative of \eqref{ADMM_image} with respect to $u$, we get 
\begin{equation}\label{eq:update_u_mri}
    u_{j+1} = (\lambda A^T A-\rho \Delta + \beta I)^{-1} (\lambda A^Tf +\beta u^k+\rho(y-\eta_j)+h^k). 
\end{equation}
% \yl{need to add a remark about the inversion can be carried out by FFT...}
For image deblurring or the MRI reconstruction,  the inverse in the $u$-update \eqref{eq:update_u_mri}  can be computed efficiently via the fast Fourier transform.

The update for the variable $y$ is given by 
\begin{equation*}
    y_{j+1} = \operatorname{shrink}\left(\nabla u_{j+1} +\eta_j, \frac{1}{\rho \|\nabla u\|_2}\right). 
\end{equation*}
We summarize the proposed algorithm for minimizing the $L_1/L_2$ on the gradient in Algorithm~\ref{alg:unified_image}.

\begin{algorithm}[t]
		\caption{Proposed algorithm for the $L_1/L_2$ model on the gradient.}
		\label{alg:unified_image}
		\begin{algorithmic}[1]
			\STATE{Input: a linear operator $A$, observed data $f$,  }
			\STATE{Parameters: $\rho, \lambda, \beta$, kMax, jMax,  and  $\epsilon \in \mathds{R}$}
			\STATE{Initialize: $ \eta =  0, k,j = 0$ and $u^0$}
			
			\WHILE{$k < $ kMax or $\| u^{k}- u^{k-1}\|_2/\| u^{k}\|_2 > \epsilon$}
			
			\WHILE{$j < $ jMax or $\| u_{j}-u_{j-1}\|_2/ \| u_{j}\|_2 > \epsilon$}
			% \STATE{$\h u_{j+1} = (\lambda A^TA+(\rho+\gamma)D^TD + \beta I)^{-1}(\lambda A^T (\h b-\h z_j) + \gamma D^T(\h d_j -\h y_{j})  $  \\ $ \quad \quad \quad  +\rho D^T(\h h^{(k)}-\h g^{(k)})+\beta (\h v_j-\h w_j))$
			% }
			\STATE{$     u_{j+1} = (\lambda A^T A-\rho \Delta + \beta I)^{-1} (\lambda A^Tf +\beta u^k+\rho(y-\eta_j)+h^k)$}
			\STATE{$
			    y_{j+1} = \operatorname{shrink}\left(\nabla u_{j+1} +\eta_j, \frac{1}{\rho \|\nabla u\|_2}\right) 
			$ }
			\STATE{$\eta_{j+1} =  \eta_j + u_{j+1}-y_{j+1} $}
% \STATE{$\h z_{j+1} = \h z_{j} +  A \h u_{j+1}- \h b$ }
		 %$(\text{ only for the } L_1/L_2\text{-box})$$
% 			\STATE{$\h e_{j+1} = \h e_{j} + \h u_{j+1} - \h v_{j+1}$}
			\STATE{Assign $j$ by $j + 1$}
			\ENDWHILE
			\STATE{ Set $u^{k+1}$ as $ u_j$}
% 			\STATE{$\h h^{(k+1)} =
% 				\begin{cases}
% 				\tau^{(k)} \left( D \h u^{(k+1)} + \h g^{(k)} \right) & D \h u^{(k+1)} + \h g^{(k)} \neq 0\\
% 				\h r^{(k)}	& \text{otherwise}
% 				\end{cases} $}
\STATE{Update $ h^{k+1}$ by 
$
    h^k = \frac{\|\nabla u^k\|_1}{\|\nabla u^k\|_2^3}\Delta u^k
$
}

			\STATE{Assign $k$ and $j$ by $k+1$ and 0, respectively}
			\ENDWHILE
			\RETURN $ u^\ast =  u^{k}$ \end{algorithmic}
	\end{algorithm}

\section{Mathematical analysis}\label{sect:analysis}
This section is split into two parts.
In Section~\ref{sect: convergence}, we prove the convergence of a modified scheme to the solution of the quotient model \eqref{eq:quotient_model}.
To do so, we need a technical uniform bound assumption, which is analyzed in Section~\ref{sect:bounded} based on a continuous formulation of the scheme.
%\jfa{We then discuss the fact that the sequence cannot go to zero in Subsection~\ref{subsection_zero}.}

\subsection{Convergence of the scheme}\label{sect: convergence}

We first show that a fully implicit version of the numerical scheme \eqref{eq:minimization_flow} converges (up to a subsequence) to a solution of our original problem \eqref{eq:quotient_model} under a reasonable uniform bound assumption.
In our analysis, we make use of Lemma~\ref{lemma_diff} that is related to the subdifferential of one homogeneous convex function (see for instance \cite{bungert2021nonlinear,feld2019rayleigh}):

\begin{lemma} \label{lemma_diff}
If $J$ is a convex one homogeneous function, then the following hold:
\begin{itemize}
    \item[(i)]
If $p \in \partial J(u)$, then $J(u)=\langle p,u \rangle$.
    \item[(ii)]
    If $p \in \partial J(u)$, then $J(v) \geq \langle p,v \rangle, \ \forall v$. 
    %\yl{[for all v? any ref for this lemma?] previous paper with Guy}
\end{itemize}
\end{lemma}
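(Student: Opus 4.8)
The plan is to argue directly from the definition of the subdifferential, namely that $p \in \partial J(u)$ means $J(v) \ge J(u) + \langle p, v-u\rangle$ for all $v$, and to exploit one-homogeneity by evaluating this inequality at a few cleverly chosen points. Recall that (positive) one-homogeneity gives $J(tu) = tJ(u)$ for $t \ge 0$; in particular, taking $t=0$ and using that $J$ is real-valued forces $J(0)=0$, a fact I will use below (and which is in any case recorded in the paper right after the definition of absolute one-homogeneity).

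For part (i), I would first plug $v = 2u$ into the subgradient inequality. Since $J(2u) = 2J(u)$, this gives $2J(u) \ge J(u) + \langle p, u\rangle$, that is, $J(u) \ge \langle p, u\rangle$. Then I would plug $v = 0$ into the same inequality; since $J(0)=0$, it reduces to $0 \ge J(u) - \langle p, u\rangle$, that is, $\langle p, u\rangle \ge J(u)$. Combining the two inequalities yields $J(u) = \langle p, u\rangle$.

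For part (ii), I would simply rewrite the defining inequality as $J(v) \ge J(u) - \langle p, u\rangle + \langle p, v\rangle$ and substitute the identity $J(u) = \langle p, u\rangle$ just obtained in (i); the first two terms on the right cancel, leaving $J(v) \ge \langle p, v\rangle$ for every $v$, as claimed.

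There is no genuine obstacle here; the only point requiring mild care is that one-homogeneity is invoked only for the nonnegative scalars $2$ and $0$, so no absolute-value subtlety arises, and that $J(0)=0$ is available. Convexity enters only through the validity of the subgradient inequality itself, and properness and lower semicontinuity are not needed beyond ensuring that $\partial J(u)$ is a meaningful object.
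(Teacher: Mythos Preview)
Your proof is correct and is the standard argument for this well-known fact. The paper itself does not supply a proof of this lemma, instead citing references (bungert2021nonlinear, feld2019rayleigh), so there is no in-paper argument to compare against; your derivation from the subgradient inequality with the choices $v=2u$ and $v=0$ is exactly the expected one.
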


\paragraph{Fully implicit scheme:}
We recall that the sequence $\{u^k\}$ is defined by Equation \eqref{eq:minimization_flow}.
%by:
%\yl{[we may want to remove the following equation, as same to \eqref{eq:minimization_flow}; this is what we came up, not adding another equation number for the same thing]}
% \begin{equation}
%     \label{eq:minimization_flow2}
%     u^{k+1} = \arg\min_u \left\{ \frac{H(u^k)}{2dt}\|u- u^k\|_2^2-R(u^k)\langle q^k, u\rangle+J(u)+\frac{\lambda H(u^k)}{2}\|Au-f\|_2^2\right\},
% \end{equation}
% which amounts to finding:
%\begin{equation*}
   % \label{eq:minimization_flow3}
 %   u^{k+1} = \arg\min_u \left\{ \frac{\beta}{2}\|u- u^k\|_2^2-\frac{R(u^k)}{H(u^k)}\langle q^k, u\rangle+ \frac{J(u)}{H(u^k)}+\frac{\lambda}{2}\|Au-f\|_2^2\right\},
%\end{equation*}
%We will empirically show that these two parameters can be fixed through all the iterations. 
In fact, we are going to analyze a slightly different scheme, which is referred to as a  fully implicit scheme,
\begin{equation}
    \label{eq:minimization_flow_new}
    u^{k+1} = \arg\min_u \left\{ \frac{\beta}{2}\|u- u^k\|_2^2-\frac{R(u^k)}{H(u)}\langle q^k, u\rangle + R(u)+\frac{\lambda}{2}\|Au-f\|_2^2\right\},
\end{equation}
where the term $\frac{1}{H(u^k)}$ in \eqref{eq:minimization_flow} has been replaced by $\frac{1}{H(u)}$.
We remark that the numerical scheme \eqref{eq:minimization_flow} is much easier to handle with the term $\frac{1}{H(u^k)}$, but the mathematical analysis of \eqref{eq:minimization_flow_new} happens to be much easier with $\frac{1}{H(u)}$. We establish in Theorem~\ref{th_tech} that $\|u^{k+1} -u^k\|_2 \to 0$ when $k\to +\infty$.

%\jfa{In the following, we assume that $u^k$ is never 0. We will discuss this assumption in subsection~\ref{subsection_zero}.}

\begin{theorem} \label{th_tech}
For absolutely one-homogeneous functionals $J(\cdot), H(\cdot)$, 
%if the sequence $\{u_k\}$ defined by \eqref{eq:minimization_flow_new} is uniformly bounded, 

then $\sum \|u^{k+1} -u^k\|_2^2 $ converges, and thus $\|u^{k+1} -u^k\|_2 \to 0$ when $k\to +\infty$.
\end{theorem}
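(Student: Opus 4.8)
The plan is to exhibit $G$ from \eqref{eq:general-obj} as a Lyapunov functional for the fully implicit scheme \eqref{eq:minimization_flow_new}, with $\frac{\beta}{2}\|u^{k+1}-u^k\|_2^2$ playing the role of the per-step dissipation; summing the resulting monotonicity estimate then gives the claim. Throughout I assume, as is implicit in the statement, that the iterates $\{u^k\}$ are well-defined, i.e.\ $u^k\neq 0$ and $H(u^k)>0$, so that $R(u^k)$ and $q^k\in\partial H(u^k)$ make sense and the minimum in \eqref{eq:minimization_flow_new} is attained.

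Denote by $\Phi_k(u)$ the functional minimized in \eqref{eq:minimization_flow_new}. The first step is to evaluate $\Phi_k$ at $u=u^k$: the quadratic proximal term vanishes, and Lemma~\ref{lemma_diff}(i) applied to $H$ gives $\langle q^k,u^k\rangle=H(u^k)$, so that $-\frac{R(u^k)}{H(u^k)}\langle q^k,u^k\rangle+R(u^k)=-R(u^k)+R(u^k)=0$. Hence $\Phi_k(u^k)=\frac{\lambda}{2}\|Au^k-f\|_2^2$.

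The second step is to bound $\Phi_k(u^{k+1})$ from below. Since $q^k\in\partial H(u^k)$, Lemma~\ref{lemma_diff}(ii) (for $H$) yields $\langle q^k,u^{k+1}\rangle\le H(u^{k+1})$; multiplying by the nonnegative factor $R(u^k)/H(u^{k+1})$ and negating gives $-\frac{R(u^k)}{H(u^{k+1})}\langle q^k,u^{k+1}\rangle\ge -R(u^k)$. Adding $R(u^{k+1})$ shows that the two middle terms of $\Phi_k(u^{k+1})$ dominate $R(u^{k+1})-R(u^k)$, so that $\Phi_k(u^{k+1})\ge \frac{\beta}{2}\|u^{k+1}-u^k\|_2^2+R(u^{k+1})-R(u^k)+\frac{\lambda}{2}\|Au^{k+1}-f\|_2^2$. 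Combining this with the optimality inequality $\Phi_k(u^{k+1})\le\Phi_k(u^k)=\frac{\lambda}{2}\|Au^k-f\|_2^2$ and rearranging yields the descent estimate
\[
\frac{\beta}{2}\|u^{k+1}-u^k\|_2^2+G(u^{k+1})\le G(u^k).
\]

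Finally, I would sum this inequality over $k=0,\dots,N$; the $G$-terms telescope, giving $\frac{\beta}{2}\sum_{k=0}^{N}\|u^{k+1}-u^k\|_2^2\le G(u^0)-G(u^{N+1})\le G(u^0)$, where the last step uses $G\ge 0$ (both $R\ge 0$ and the fidelity term are nonnegative). Letting $N\to\infty$, the partial sums of $\sum_k\|u^{k+1}-u^k\|_2^2$ are bounded, so the series converges and in particular $\|u^{k+1}-u^k\|_2\to 0$. The crux of the argument — and the reason one analyzes \eqref{eq:minimization_flow_new} rather than \eqref{eq:minimization_flow} — is precisely that the $\frac{1}{H(u)}$ in the cross term (instead of $\frac{1}{H(u^k)}$) lets Lemma~\ref{lemma_diff}(ii) collapse $-\frac{R(u^k)}{H(u^{k+1})}\langle q^k,u^{k+1}\rangle+R(u^{k+1})$ into the clean lower bound $R(u^{k+1})-R(u^k)$; with $\frac{1}{H(u^k)}$ this cancellation is unavailable. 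Note that no uniform bound on $\{u^k\}$ is needed for this step — that hypothesis is only required later, to extract a convergent subsequence.
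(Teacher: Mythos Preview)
Your proof is correct and follows essentially the same argument as the paper: evaluate the objective of \eqref{eq:minimization_flow_new} at $u^k$ using Lemma~\ref{lemma_diff}(i), bound it from below at $u^{k+1}$ using Lemma~\ref{lemma_diff}(ii), combine with optimality, and telescope. Your framing of the resulting inequality as $\frac{\beta}{2}\|u^{k+1}-u^k\|_2^2+G(u^{k+1})\le G(u^k)$, i.e.\ $G$ as a Lyapunov functional with per-step dissipation, is slightly cleaner than the paper's presentation, and your closing remarks on why the implicit $1/H(u)$ is essential and why no uniform bound is needed here are apt.
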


\begin{proof}
Define the objective function in \eqref{eq:minimization_flow_new} by
%\guy{3rd term: $H(u)$ should be $R(u)$?}
\begin{equation}
    F(u):=\frac{\beta}{2}\|u- u^k\|_2^2-\frac{R(u^k)}{H(u)} \langle q^k, u\rangle+R(u)+\frac{\lambda}{2}\|Au-f\|_2^2.
\end{equation}
It is straightforward that
%\guy{2nd term: $J(u^k)$ should be $R(u^k)$?}
\begin{equation}
    F(u^k)=-\frac{R(u^k)}{H(u^k)} \langle q^k, u^k \rangle+R(u^k)+\frac{\lambda}{2}\|Au^k-f\|_2^2.
\end{equation}
Since $H$ is absolutely one-homogeneous, we use Lemma~\ref{lemma_diff} (i) to obtain $\langle q^k , u^k\rangle = H(u^k),$ thus leading to
    \begin{equation}\label{eq:usingLemma1}
     F(u^k)=
 - R(u^k) +R(u^k)+\frac{\lambda}{2}\|Au^k-f\|_2^2    =\frac{\lambda}{2}\|Au^k-f\|_2^2.
\end{equation}
It follows from Lemma~\ref{lemma_diff} (ii) that $H(u^{k+1})\geq \langle q^k, u^{k+1}\rangle,$ which implies that
\begin{equation*}
\begin{split}\label{eq:usingLemma2}
    F(u^{k+1}) & =  \frac{\beta}{2}\|u^{k+1}- u^k\|_2^2-\frac{R(u^k)}{H(u^{k+1})} \langle q^k, u^{k+1} \rangle+R(u^{k+1})+\frac{\lambda}{2}\|Au^{k+1}-f\|_2^2
    \\
    & \geq  \frac{\beta}{2}\|u^{k+1}- u^k\|_2^2
    -\frac{R(u^k)}{H(u^{k+1})} H(u^{k+1}) +R(u^{k+1})+\frac{\lambda}{2}\|Au^{k+1}-f\|_2^2\\
    &=\frac{\beta}{2}\|u^{k+1}- u^k\|_2^2
    -R(u^k)  +R(u^{k+1})+\frac{\lambda}{2}\|Au^{k+1}-f\|_2^2.
\end{split}  
\end{equation*}
We use the fact that $F(u^{k+1}) \leq F(u^k)$ to deduce:
\begin{equation}
    \frac{\beta}{2}\|u^{k+1}- u^k\|_2^2
    -R(u^k)  +R(u^{k+1})+\frac{\lambda}{2}\|Au^{k+1}-f\|_2^2
    \leq
    \frac{\lambda}{2}\|Au^k-f\|_2^2.
\end{equation}
Summing from $1$ to $N$, 
%\yl{[conflict to $K$ in $S_K$; maybe change to N]},
we get:
\begin{eqnarray*}
    \frac{\beta}{2} \sum_{k=1}^N \|u^{k+1}- u^k\|_2^2
    &\leq&
    R(u^1)-R(u^{N+1})
    +\frac{\lambda}{2}\Big(\|Au^1-f\|_2^2
    -\|Au^{N+1}-f\|_2^2 \Big)\\
    &\leq &
    R(u^1)
    +\frac{\lambda}{2}\|Au^1-f\|_2^2,
\end{eqnarray*}
due to $R(u)\geq 0$ and $\|Au-f\|_2^2\geq 0$ for any $u.$
Let $N\rightarrow \infty,$ we obtain that $\sum_{k=1}^{\infty} \|u^{k+1}- u^k\|_2^2$ is bounded, which implies that $\|u^{k+1}- u^k\|_2^2\rightarrow 0$.
% \yl{[where did we use the uniform boundedness of u in the proof? do we need such an assumption for this theorem?]}
% \jf{You are right, we do not need this assumption in Theorem 1, only in Theorem 2 for the compactness of the sequence; I have change Theorem 1 accordingly.}
%\guy{We may need to explain more explicitly why the r.h.s. of (30)  is not $\infty$ by the properties of $R$ and our boundedness assumption. \jf{Should we say in the introduction that by convention $R(0)=\frac{J(0)}{H(0)}:=1$ ?}. For instance we need to assume $H(u^1)>0$ and we can remind that $R(u)\ge 0$ for any $u$. I actually did not see on page 1 in the definitions of $J(u)$ and $H(u)$ that these are absolutely one homogeneous functionals. \yl{[added]}}
\end{proof}

Now that we have proven that 
$\|u^{k+1} -u^k\|_2 \to 0$ when $k\to +\infty$, we are going to be able to pass   the limit up to a subsequence in the optimality condition of Problem~\eqref{eq:quotient_model}.

\begin{theorem} \label{th_conv}
    For absolutely one-homogeneous functionals $J(\cdot), H(\cdot)$, if the sequence $\{u_k\}$ defined by \eqref{eq:minimization_flow_new} is uniformly bounded,
then there exists a subsequence of $(u^k,p^k,q^k)$ that converges to $ (u^*,p^*,q^*)$.
    Moreover, we have
    \begin{equation}
        p^* \in \partial J(u^*), \ q^* \in \partial H(u^*), 
   \quad 
    \mbox{and}
    \quad
        0=\lambda \, A^T(Au^*-f) 
        +  \frac{p^*-R(u^*)q^*}{H(u^*)}.
    \end{equation}
\end{theorem}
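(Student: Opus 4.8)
The plan is to let $k\to\infty$ along a subsequence inside the first-order optimality condition of the subproblem~\eqref{eq:minimization_flow_new}, using the one-step estimate $\|u^{k+1}-u^k\|_2\to0$ of Theorem~\ref{th_tech} together with the compactness furnished by the uniform-boundedness hypothesis. First I would extract the limit: since $\{u^k\}$ is bounded, there is a subsequence $u^{k_j}\to u^*$, and by Theorem~\ref{th_tech} we also get $u^{k_j+1}\to u^*$. Throughout I would work under the mild non-degeneracy condition $H(u^k)>0$, $H(u^*)>0$ (automatic when $H=\|\cdot\|_2$ and $u^*\neq0$, and consistent with Theorem~\ref{thm:u-neq0}), which keeps the ratio $R=J/H$ continuous at $u^*$ and the scheme well defined.

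Next I would write down the optimality condition. Although \eqref{eq:minimization_flow_new} is nonconvex (the summand $R(u)=J(u)/H(u)$ is not convex), $u^{k+1}$ is a minimizer, so Fermat's rule together with subdifferential calculus for the quotients $J(u)/H(u)$ and $\langle q^k,u\rangle/H(u)$ (valid since $H(u^{k+1})>0$) produces $p^{k+1}\in\partial J(u^{k+1})$ and $q^{k+1}\in\partial H(u^{k+1})$ with
\begin{equation*}
0=\beta(u^{k+1}-u^k)+\lambda A^T(Au^{k+1}-f)+\frac{p^{k+1}-R(u^{k+1})q^{k+1}}{H(u^{k+1})}+R(u^k)\Big(\frac{\langle q^k,u^{k+1}\rangle}{H(u^{k+1})^2}\,q^{k+1}-\frac{q^k}{H(u^{k+1})}\Big),
\end{equation*}
where $q^k\in\partial H(u^k)$ is the subgradient fixed by the scheme. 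Deriving this identity cleanly — the nonsmooth quotient rule and the precise form of the last bracket — is, I expect, the most delicate bookkeeping in the argument.

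Then I would pass to the limit. Because $J$ and $H$ are absolutely one-homogeneous, proper, convex and finite, one has $\partial J(u)\subseteq\partial J(0)$ and $\partial H(u)\subseteq\partial H(0)$ with $\partial J(0),\partial H(0)$ bounded, so $\{p^k\}$ and $\{q^k\}$ stay in a fixed ball; passing to a further subsequence, $p^{k_j+1}\to p^*$, $q^{k_j+1}\to\hat q$, $q^{k_j}\to q^*$, and the closed-graph property of the subdifferential of a convex lsc function yields $p^*\in\partial J(u^*)$ and $\hat q,q^*\in\partial H(u^*)$. Now take $j\to\infty$ in the identity above written at $k=k_j$: the term $\beta(u^{k_j+1}-u^{k_j})$ vanishes by Theorem~\ref{th_tech}; by continuity of $J,H,R$ the remaining terms converge; and $\langle q^{k_j},u^{k_j+1}\rangle\to\langle q^*,u^*\rangle=H(u^*)$ by Lemma~\ref{lemma_diff}(i), because $q^*\in\partial H(u^*)$. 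Hence the last bracket collapses to $R(u^*)(\hat q-q^*)/H(u^*)$, which combines with the $(p^*-R(u^*)\hat q)/H(u^*)$ coming from the preceding term so that the auxiliary limit $\hat q$ cancels, leaving exactly
\begin{equation*}
0=\lambda A^T(Au^*-f)+\frac{p^*-R(u^*)q^*}{H(u^*)},\qquad p^*\in\partial J(u^*),\ q^*\in\partial H(u^*),
\end{equation*}
which is the claim.

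The main obstacles I anticipate are: \emph{(i)} making the optimality condition of the second step rigorous, since \eqref{eq:minimization_flow_new} is nonconvex and involves a quotient, so one needs a Fr\'echet/limiting subdifferential and a quotient rule rather than convex calculus; \emph{(ii)} the cancellation in the limit, which hinges on $q^*=\lim_j q^{k_j}$ lying in $\partial H(u^*)$ so that Lemma~\ref{lemma_diff}(i) applies, and on the (harmless) one-index offset between the subsequences defining $p^*$ and $q^*$, absorbed because consecutive iterates converge to the same $u^*$; and \emph{(iii)} excluding $H(u^*)=0$, which one settles either via the least-norm comparison underlying Theorem~\ref{thm:u-neq0} or by assuming the iterates stay away from $\ker H$.
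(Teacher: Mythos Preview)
Your proposal is correct and follows essentially the same route as the paper: write the first-order optimality condition for~\eqref{eq:minimization_flow_new}, invoke Theorem~\ref{th_tech} to kill the $\beta(u^{k+1}-u^k)$ term, and pass to the limit along a convergent subsequence using Lemma~\ref{lemma_diff}(i) to collapse the extra terms coming from differentiating $\langle q^k,u\rangle/H(u)$. In fact you are somewhat more careful than the paper, which silently identifies the limits of $q^{k_j}$ and $q^{k_j+1}$ and carries a sign typo in its displayed optimality condition; your explicit distinction between $\hat q$ and $q^*$ and the resulting cancellation is the cleaner version of that step.
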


\begin{proof}
Since $u^k$ is uniformly bounded, it is also the case for the subgradients $p^k$ and $q^k$.
Thus, there exists $(u^*,p^*,q^*)$ such that up to a subsequence, $(u^k,p^k,q^k) \to (u^*,p^*,q^*)$.
The optimality condition for \eqref{eq:minimization_flow_new} can be written as:
\begin{eqnarray*}
    0 & = & \beta(u^{k+1}-u^k) + \lambda \, A^T(Au^{k+1}-f) 
    \\
    & &
    +\frac{R(u^k)q^k}{H(u^{k+1})}- \frac{R(u^k)\langle q^k,u^{k+1}\rangle q^{k+1}}{(H(u^{k+1}))^2}+ \frac{p^{k+1}-R(u^{k+1})q^{k+1}}{H(u^{k+1})}.
\end{eqnarray*}
    Thanks to Theorem~\ref{th_tech}, we can pass to the limit in this last equation   to get:
    \begin{equation}\label{eq:opt-cond}
        0=\lambda \, A^T(Au^*-f) 
        +  \frac{p^*-R(u^*)q^*}{H(u^*)},
    \end{equation}
    where we use Lemma~\ref{lemma_diff} for $\langle q^*,u^* \rangle =H(u^*).$ Note that  \eqref{eq:opt-cond} is the original optimality condition $\nabla G(u^*)=0$, i.e.,  Equation~\eqref{eq_nablaG} for the optimization problem~\eqref{eq:quotient_model}.
\end{proof}

\subsection{Uniform boundedness of the sequence $\{u^k\}$}\label{sect:bounded}

%Let us now see why the sequence $(u^k)$ is indeed a uniformly bounded sequence.
%For this purpose, we conduct our analysis in a continuous setting, which enables us to have tractable computations. 

The goal of this subsection is to explain why the technical assumption on the uniform boundedness of the sequence $\{u^k\}$ is reasonable for 
%Theorems~\ref{th_tech} and
Theorem~\ref{th_conv}.
Instead of dealing with the discrete sequence $\{u^k\}$, we conduct our analysis in a continuous setting, which enables us to have tractable computations. In particular, we consider a differentiable function $u$ of the continuous flow, that is,     $u_t = -\nabla G(u)$ in \eqref{flow}. 
    Notice that $u^k$ defined by \eqref{eq:minimization_flow_new} can be seen as a discretized version of $u$.

%\yl{[should we swap Thm 2 and Thm 3? so we don't need to have the uniformly bounded as assumption in Thm2.]   To discuss.  The issue at the moment is that Theorem 3 is only valid in continous (not for the discrete scheme)}

We show in Theorem~\ref{thm:flow29} that a mapping of $t \mapsto \|u\|_2^2$
is a non-increasing function as long as $\|Au\|\geq \|f\|$.

\begin{theorem}
\label{thm:flow29}
Suppose $u(t)$ is a differentiable function with respect to the time $t$ that satisfies the flow \eqref{flow}, i.e., 
\begin{equation} \label{flow29}
        u_t=- \nabla G(u)= -\lambda \, A^T(Au-f) 
        -  \frac{p-R(u)q}{H(u)}.
    \end{equation}
    If $\|Au\|_2\geq \|f\|_2$,
    %\frac{\|f\|}{\|A\|} $, 
    then
    \begin{equation}
        \frac{d}{dt}\left(
        \|u\|_2^2
        \right)\leq 0.
    \end{equation}
\end{theorem}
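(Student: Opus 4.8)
The plan is to compute $\frac{d}{dt}\|u\|_2^2 = 2\langle u, u_t\rangle$ directly, substitute the flow equation \eqref{flow29}, and then show each resulting term is non-positive under the hypothesis $\|Au\|_2 \geq \|f\|_2$. Expanding,
\[
\frac{1}{2}\frac{d}{dt}\|u\|_2^2 = \langle u, u_t\rangle = -\lambda\langle u, A^T(Au-f)\rangle - \frac{1}{H(u)}\langle u, p - R(u)q\rangle.
\]

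First I would handle the regularization term. Since $p \in \partial J(u)$ and $q \in \partial H(u)$ with $J, H$ absolutely one-homogeneous, Lemma~\ref{lemma_diff}(i) gives $\langle p, u\rangle = J(u)$ and $\langle q, u\rangle = H(u)$. Hence
\[
\frac{1}{H(u)}\langle u, p - R(u)q\rangle = \frac{1}{H(u)}\bigl(J(u) - R(u)H(u)\bigr) = \frac{1}{H(u)}\bigl(J(u) - J(u)\bigr) = 0,
\]
so the regularization contribution vanishes identically along the flow. This is the clean part.

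Next I would handle the data-fidelity term $-\lambda\langle u, A^T(Au-f)\rangle = -\lambda\langle Au, Au-f\rangle$. Write $\langle Au, Au - f\rangle = \|Au\|_2^2 - \langle Au, f\rangle$. By Cauchy–Schwarz, $\langle Au, f\rangle \leq \|Au\|_2\|f\|_2$, so
\[
\langle Au, Au-f\rangle \geq \|Au\|_2^2 - \|Au\|_2\|f\|_2 = \|Au\|_2\bigl(\|Au\|_2 - \|f\|_2\bigr) \geq 0
\]
precisely when $\|Au\|_2 \geq \|f\|_2$ (using $\|Au\|_2 \geq 0$). Therefore $-\lambda\langle Au, Au-f\rangle \leq 0$ since $\lambda > 0$. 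Combining the two pieces yields $\frac{1}{2}\frac{d}{dt}\|u\|_2^2 \leq 0$, which is the claim.

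The only subtlety — and the place I would be careful — is the well-definedness of the expressions: the flow equation divides by $H(u)$, so implicitly one assumes $H(u) \neq 0$ along the trajectory (equivalently $u$ stays outside the null space of $H$); the cancellation of the regularization term is exactly what makes this harmless for the monotonicity statement. There is no real obstacle here; the result is essentially a one-line Lyapunov-type computation once Lemma~\ref{lemma_diff} is invoked, and the hypothesis $\|Au\|_2 \geq \|f\|_2$ is used solely to sign the data-fidelity term via Cauchy–Schwarz.
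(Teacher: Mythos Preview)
Your proof is correct and follows essentially the same route as the paper: compute $\langle u, u_t\rangle$, use Lemma~\ref{lemma_diff}(i) to show the regularization contribution vanishes via $\langle p,u\rangle = J(u)$ and $\langle q,u\rangle = H(u)$, and then bound the data-fidelity term using Cauchy--Schwarz under the hypothesis $\|Au\|_2 \geq \|f\|_2$. Your inclusion of the factor $\tfrac{1}{2}$ in $\tfrac{1}{2}\tfrac{d}{dt}\|u\|_2^2 = \langle u,u_t\rangle$ is in fact more careful than the paper's own write-up, and your remark about needing $H(u)\neq 0$ along the flow is a valid caveat.
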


\begin{proof}
Simple calculations lead to
    \begin{eqnarray*}
        \frac{d}{dt}\left(
        \|u\|_2^2
        \right) & = &
        \langle u, u_t\rangle \\
        & = &
        -\lambda \langle Au-f, Au \rangle -  \frac{\langle p, u \rangle -R(u)\langle q,u\rangle }{H(u)}
        \\
        & = &
        -\lambda \langle Au-f, Au \rangle -  \frac{J(u) -R(u)  H(u) }{H(u)} 
        \\
        & = &
        -\lambda \left( \|Au\|_2^2 -\langle f, Au \rangle \right),
    \end{eqnarray*}
where we use Lemma~\ref{lemma_diff} with $p \in \partial J(u)$ and $q \in \partial H(u)$.
It further follows from the Cauchy-Schwartz inequality that
\begin{equation}
     \frac{d}{dt}\left(
        \|u\|_2^2
        \right) \leq  \lambda \|Au\| \left( \| f\| - \|Au\|\right).
\end{equation}
Consequently, if   $\|Au\|\geq \|f\|$, then $\|u\|_2^2$ is a non-increasing function. 
\end{proof}

We give a numerical verification of Theorem~\ref{thm:flow29} in Figure~\ref{fig:unorm}. 
A direct consequence of Theorem~\ref{thm:flow29} leads to the  following two corollaries. 

\begin{corollary}
If $A$ is coercive (i.e. there exists $c>0$ such that $\|Au\| \geq c \|u\|$), then any function $u$ satisfying the flow \eqref{flow29} is uniformly bounded.
\end{corollary}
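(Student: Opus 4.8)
The plan is to turn the \emph{conditional} monotonicity of Theorem~\ref{thm:flow29} into an unconditional a priori bound by using coercivity to control exactly when the hypothesis $\|Au\|_2 \geq \|f\|_2$ is active. First I would set the threshold radius $R := \|f\|_2/c$ and note that $\|u(t)\|_2 \geq R$ implies, by coercivity, $\|Au(t)\|_2 \geq c\|u(t)\|_2 \geq \|f\|_2$; hence Theorem~\ref{thm:flow29} gives $\frac{d}{dt}\|u(t)\|_2^2 \leq 0$ at every time $t$ at which $\|u(t)\|_2 \geq R$. In words: the trajectory can never increase in norm once it has left the closed ball of radius $R$.

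Second, I would promote this to the global bound $\|u(t)\|_2 \leq M := \max\{\|u(0)\|_2, R\}$ for all $t \geq 0$, via a standard continuity (barrier) argument. Assume for contradiction that $\|u(t_1)\|_2 > M$ for some $t_1 > 0$. Since $t \mapsto \|u(t)\|_2$ is continuous and $\|u(0)\|_2 \leq M$, the time $t_0 := \sup\{ t \in [0,t_1] : \|u(t)\|_2 \leq M \}$ is well defined and satisfies $t_0 < t_1$, $\|u(t_0)\|_2 = M$, and $\|u(t)\|_2 > M \geq R$ for all $t \in (t_0, t_1]$. By the first step, $\phi(t) := \|u(t)\|_2^2$ then has $\phi'(t) \leq 0$ on $[t_0, t_1]$, so $\phi$ is non-increasing there, whence $\|u(t_1)\|_2^2 = \phi(t_1) \leq \phi(t_0) = M^2$, contradicting $\|u(t_1)\|_2 > M$.

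Finally I would conclude that $\|u(t)\|_2 \leq \max\{\|u(0)\|_2, \|f\|_2/c\}$ for all $t \geq 0$, i.e.\ the trajectory stays confined to a fixed ball depending only on the initial datum and the data $(A,f)$, which is precisely the asserted uniform boundedness. The only step requiring care is the continuity argument: checking that $t_0$ is strictly below $t_1$ (immediate from continuity since $\|u(t_1)\|_2 > M$ strictly) and that $\|u(\cdot)\|_2 \geq R$ holds throughout the \emph{closed} interval $[t_0, t_1]$, so that Theorem~\ref{thm:flow29} may be invoked at every point of it — and this is automatic because $M \geq R$ by construction. Everything else is a direct substitution into Theorem~\ref{thm:flow29}, so I do not expect any genuine obstacle.
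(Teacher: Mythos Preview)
Your argument is correct and is exactly the natural way to make the corollary rigorous: coercivity converts the conditional hypothesis $\|Au\|_2 \geq \|f\|_2$ of Theorem~\ref{thm:flow29} into a pointwise condition on $\|u\|_2$, and a standard barrier/continuity argument then traps the trajectory in the ball of radius $\max\{\|u(0)\|_2,\|f\|_2/c\}$. The paper itself gives no proof beyond the sentence ``a direct consequence of Theorem~\ref{thm:flow29},'' so your write-up simply supplies the details the authors left implicit.
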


The coercivity assumption on $A$ can further be weakened.
For instance, if we write $u=v+w$ with $v \in \text{Ker}(A)$ and 
$w \in (\text{Ker}(A))^\perp$ (notice that this decomposition exists and is unique), then we only need a uniform boundedness assumption on $v$.

\begin{corollary}
Suppose $u(t)$ satisfies the flow \eqref{flow29}. 
We can uniquely express $u=v+w$ with $v \in \text{Ker}(A)$ and 
$w \in (\text{Ker}(A))^\perp$.
If $v$ is uniformly bounded, then  $u$ is uniformly bounded.
    \end{corollary}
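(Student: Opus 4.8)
The plan is to reduce the statement to Theorem~\ref{thm:flow29} by controlling the component of $u$ along $(\text{Ker}(A))^\perp$. Write $u = v + w$ with $v(t) \in \text{Ker}(A)$ and $w(t) \in (\text{Ker}(A))^\perp$; since $\mathbb{R}^n = \text{Ker}(A) \oplus (\text{Ker}(A))^\perp$ is an orthogonal decomposition, this splitting exists, is unique, and $\|u\|_2^2 = \|v\|_2^2 + \|w\|_2^2$. By hypothesis there is a constant $C_v$ with $\|v(t)\|_2 \le C_v$ for all $t$, so it suffices to produce a uniform bound on $\|w(t)\|_2$. The key observation is that $Au = Av + Aw = Aw$, so $\|Au\|_2 = \|Aw\|_2$; moreover, $A$ restricted to $(\text{Ker}(A))^\perp$ is injective, hence coercive on that subspace: there exists $c>0$ such that $\|Aw\|_2 \ge c\|w\|_2$ for all $w \in (\text{Ker}(A))^\perp$ (take $c$ to be the smallest nonzero singular value of $A$).

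Next I would feed this into the computation of Theorem~\ref{thm:flow29}. That theorem's proof actually establishes the identity
\begin{equation*}
  \frac{d}{dt}\left(\|u\|_2^2\right) = -\lambda\left(\|Au\|_2^2 - \langle f, Au\rangle\right) \le \lambda\|Au\|_2\left(\|f\|_2 - \|Au\|_2\right),
\end{equation*}
valid for all $t$ (not only when $\|Au\|_2 \ge \|f\|_2$). I now argue that $\|Au(t)\|_2$ stays bounded, say $\|Au(t)\|_2 \le \max\{\|Au(0)\|_2,\ \|f\|_2\} =: R_0$ for all $t \ge 0$: indeed, whenever $\|Au\|_2 > \|f\|_2$ the right-hand side above is negative, so $\|u\|_2^2$ — and hence, via $\|Au\|_2 = \|Aw\|_2 \ge c\|w\|_2$ together with $\|u\|_2^2 \ge \|w\|_2^2$, also $\|Au\|_2$ — cannot increase past any level it has already exceeded once $\|Au\|_2$ is above $\|f\|_2$. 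Making this rigorous is the one delicate point (see below); granting it, we get $\|Aw(t)\|_2 = \|Au(t)\|_2 \le R_0$, whence $\|w(t)\|_2 \le R_0/c$ for all $t$, and therefore $\|u(t)\|_2 \le \sqrt{C_v^2 + R_0^2/c^2}$, i.e.\ $u$ is uniformly bounded.

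The main obstacle is the barrier argument that turns the sign information on $\frac{d}{dt}\|u\|_2^2$ into an actual uniform bound on $\|Au\|_2$, since $\|u\|_2^2$ decreasing does not by itself prevent $\|w\|_2^2$ (and hence $\|Au\|_2$) from growing while $\|v\|_2^2$ shrinks. The clean fix is to differentiate $\|w\|_2^2$ directly rather than $\|u\|_2^2$: projecting the flow \eqref{flow29} onto $(\text{Ker}(A))^\perp$ and using that $A^T(Au-f)$ already lies in $(\text{Ker}(A))^\perp = \text{Im}(A^T)$, one obtains $\langle w, w_t\rangle = -\lambda\langle Au-f, Au\rangle - \langle w,\ \Pi(p-R(u)q)/H(u)\rangle$ where $\Pi$ is the orthogonal projection onto $(\text{Ker}(A))^\perp$; the term involving $p$ and $q$ must then be shown to have the right sign or to be dominated, e.g.\ using the one-homogeneity bounds from Lemma~\ref{lemma_diff} and the assumed upper bound $R(u)\le M$ on the Rayleigh quotient. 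Once $\|w\|_2^2$ is shown to be non-increasing whenever $\|Aw\|_2 \ge \|f\|_2$ — the exact analogue of Theorem~\ref{thm:flow29} but for $w$ in place of $u$ — coercivity of $A$ on $(\text{Ker}(A))^\perp$ closes the argument exactly as in the Corollary for coercive $A$, and adding back the bounded $v$ finishes the proof.
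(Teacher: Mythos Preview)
You have all the right ingredients --- the orthogonal splitting, $Au=Aw$, and the coercivity of $A$ on $(\text{Ker}(A))^\perp$ via the smallest nonzero singular value $c$ --- but you run the barrier argument on the wrong scalar quantity. You try to trap $\|Au\|_2$ (and then $\|w\|_2^2$), and you correctly spot that ``$\|u\|_2^2$ decreasing'' does not by itself force $\|Au\|_2$ or $\|w\|_2^2$ to decrease; your proposed fix, differentiating $\|w\|_2^2$, leaves you with the cross term $\langle w,\,p-R(u)q\rangle/H(u)$. Using Lemma~\ref{lemma_diff} one can rewrite $\langle w,p-R(u)q\rangle=-\langle v,p\rangle+R(u)\langle v,q\rangle$ and bound its numerator by $J(v)+M\,H(v)$, but the factor $1/H(u)$ is not controlled along the flow, so this route does not close.

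The clean argument --- and the one the paper has in mind when it calls the corollary a direct consequence of Theorem~\ref{thm:flow29} --- is to run the barrier on $\|u\|_2^2$ itself. Set $M_0:=C_v^2+\|f\|_2^2/c^2$. Whenever $\|u(t)\|_2^2>M_0$, the bound $\|v(t)\|_2\le C_v$ forces $\|w(t)\|_2^2=\|u(t)\|_2^2-\|v(t)\|_2^2>\|f\|_2^2/c^2$, hence $\|Au(t)\|_2=\|Aw(t)\|_2\ge c\|w(t)\|_2>\|f\|_2$, and Theorem~\ref{thm:flow29} gives $\tfrac{d}{dt}\|u(t)\|_2^2\le 0$. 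A standard continuity/barrier argument then yields $\|u(t)\|_2^2\le\max\{\|u(0)\|_2^2,\,M_0\}$ for all $t\ge 0$. In short: don't separate $w$ from $u$ in the Lyapunov step --- use the hypothesis on $v$ only to convert ``$\|u\|$ large'' into ``$\|Au\|\ge\|f\|$'', and let Theorem~\ref{thm:flow29} do the rest.
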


%or replaced by an assumption such as 
%$(\text{Ker}(A)) \subset $

%\begin{proof}
%    Let us write 
%\end{proof}

%%%%%%
\off{
%%%%%

\jfa{\subsection{Why $u^k$ does not go to 0 ?} \label{subsection_zero}}
\yl{Similar to the proof of Theorem \ref{thm:u-neq0} we can show that the sequence $u^k$ generated by the modified scheme \eqref{eq:minimization_flow_new} can never be zero if not starting from the zero vector. The only difference between \eqref{eq:minimization_flow}  and \eqref{eq:minimization_flow_new} is $-\frac{R(u^k)}{H(u^k)}\langle q^k, u\rangle$ versus $-\frac{R(u^k)}{H(u)}\langle q^k, u\rangle$, the latter of which is fortunately a convex function if $H(\cdot)$ is convex by assumption. As a result, the objective function defined in \eqref{eq:minimization_flow_new} is strongly convex with parameter $\beta>0$.}

\jfa{We remind the reader that $u^{k+1}$ is defined from $u^k$ thanks to~\eqref{eq:minimization_flow_new}.
Let us denote by $\tilde u :=u^{k+1}$, and $\tilde u_\alpha:=\alpha \tilde u$ for $\alpha \neq 0$.
We can remark that $R( \tilde u_\alpha )=R( \tilde u)$, and that
$\frac{R(u^k)}{H(\tilde u_\alpha)}\langle q^k, \tilde u_\alpha\rangle=sg(\alpha) \, \frac{R(u^k)}{H(\tilde u)}\langle q^k, \tilde u\rangle$
where $sg(\alpha)$ is the sign of $\alpha$.
Hence, using the notations of the proof of Theorem~\ref{th_tech}, we get that:
\begin{equation}
    F(\tilde u_\alpha)=\frac{\beta}{2}\|\tilde u_\alpha - u^k\|_2^2-\frac{R(u^k)}{H(\tilde u_\alpha)} \langle q^k, \tilde u_\alpha\rangle+R(\tilde u_\alpha)+\frac{\lambda}{2}\|A \tilde u_\alpha -f\|_2^2
\end{equation}
i.e.:
\begin{equation}
    F(\tilde u_\alpha)=   sg(\alpha) \, \frac{R(u^k)}{H(\tilde u)}\langle q^k, \tilde u\rangle+ R( \tilde u)+h(\alpha),
\end{equation}
where:
\begin{equation}
    h(\alpha):=\frac{\beta}{2}\|\tilde u_\alpha - u^k\|_2^2 + \frac{\lambda}{2}\|A \tilde u_\alpha -f\|_2^2.
\end{equation}
We can compute its derivative:
\begin{equation}
    h'(\alpha)=\alpha \left(\beta \|\tilde u\|^2 + \lambda \|A \tilde u\|^2 \right) -    \left( \beta \langle \tilde u , u^k \rangle + \lambda \langle A \tilde u , f \rangle \right)
\end{equation}
Hence $h$ has a unique minimizer $\tilde \alpha$ defined by:
\begin{equation}
    \tilde \alpha = \frac{\beta \langle \tilde u , u^k \rangle + \lambda \langle A \tilde u , f \rangle}{\beta \|\tilde u\|^2 + \lambda \|A \tilde u\|^2}
\end{equation}
Now to conclude, let us assume that $Im(A) \bigcap f^\perp = \{0\}$. \\
Hence, with the uniform boundedness assumption on the sequence $u^k$, we can say that for $\lambda $ large enough (which is the case for our problem), then $\tilde \alpha \neq 0$.
}

\jf{We could conclude easily without the $sg(\alpha)$ term in equation (34). What I think is that as long as $\lambda$ is large (which is the case for our problem), then term with $sg(\alpha)$ is negligeable with respect to the $h(\alpha)$ term, and thus it is not a problem. But I am not sure on how to write it as a theorem.}
%%%%
}
% end off
%%%%%

\section{Numerical Results}\label{sect:experiments}
% \tb{To-do list in the numerical part 
% \begin{enumerate}
%     \item  Implement $L_1/S_k$ and add a curve in \Cref{fig:obj} \yl{(add a comparison to one existing methods for solving this model)}
%         \item Model comparison with different $K$ for $L_1/S_k$ \yl{(this may not be needed, as we aim for algorithms, not for model)}
% \item Discussion on $\beta^k$ and $\mu^k$: why we fix in all the iterations.  \yl{(this is worth doing, as it is associated with our algorithm; and it may also help with convergence analysis)}
%     \item Double check the results in \Cref{Tab:MRI}
%     \item FB phantom for MRI reconstruction \yl{(this is optional, as one phantom should be sufficient as  proof of concept)}
% \end{enumerate}
% }

In this section, we showcase the effectiveness of the proposed algorithms through a set of numerical experiments. All of these experiments were carried out on a typical laptop featuring a CPU (AMD Ryzen 5 4600U at 2.10GHz) and MATLAB (R2021b). 

We start with some numerical insights of the proposed scheme in Section~\ref{sect:exp-alg}, followed by case studies of sparse signal recovery  in Section~\ref{sect:exp-signal} and MRI reconstruction in Section~\ref{sect:exp-image}.   Specifically for signal recovery, 
we conduct experiments in a noisy setting, aiming to recover an underlying sparse vector $u\in\mathbb R^n$ with $s$ non-zero elements from a set of noisy measurements,  $f=Au+\nu$,
where $A\in\mathbb R^{m\times n}$ 
is a Gaussian random matrix with each column normalized by zero mean and unit Euclidean norm, and $\nu$ is  Gaussian noise  with  zero  mean and standard deviation $\sigma$. We fix the ambient dimension $n=512$, sparsity  $s=130,$ and noise level $\sigma = 0.1$, while varying 
the number of measurements $m$ to examine the performance of sparse signal recovery. Notice that fewer measurements 
result in a more challenging  recovery process. We use the mean-square error (MSE) metric to evaluate the recovery performance.  we can obtain the ordinary least square (OLS) solution if we know the ground truth of the support set of $\Lambda = \operatorname{supp}(u)$, which refers to the index set of nonzero entries in $u$. In this case, we can consider the mean squared error (MSE) of OLS as the benchmark for the oracle performance, using $\sigma \mathrm{tr}(A_{\Lambda}^{\top}A_{\Lambda})^{-1}$, where $A_{\Lambda}$ refers to a submatrix of $A$ by taking the columns corresponding to the index set $\Lambda.$
% The OLS method determines the oracle performance, assuming that the ground truth support is known, which is a highly challenging task.

\subsection{Algorithm behaviors}\label{sect:exp-alg}

\begin{figure}
		\begin{center}
			\begin{tabular}{cc}
			   signal recovery &  image recovery \\
			   % $r=0.1$  & $r=0.8$ \\
				\includegraphics[width=0.42\textwidth]{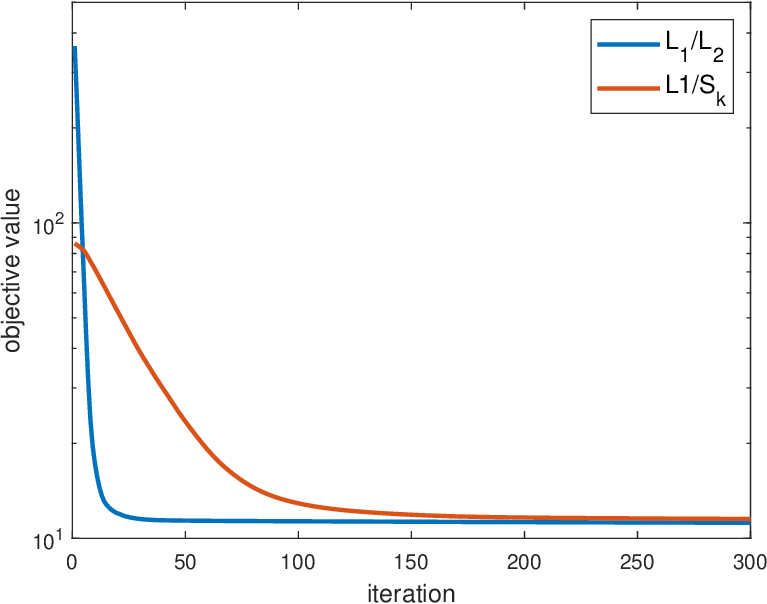} &
			    \includegraphics[width=0.42\textwidth]{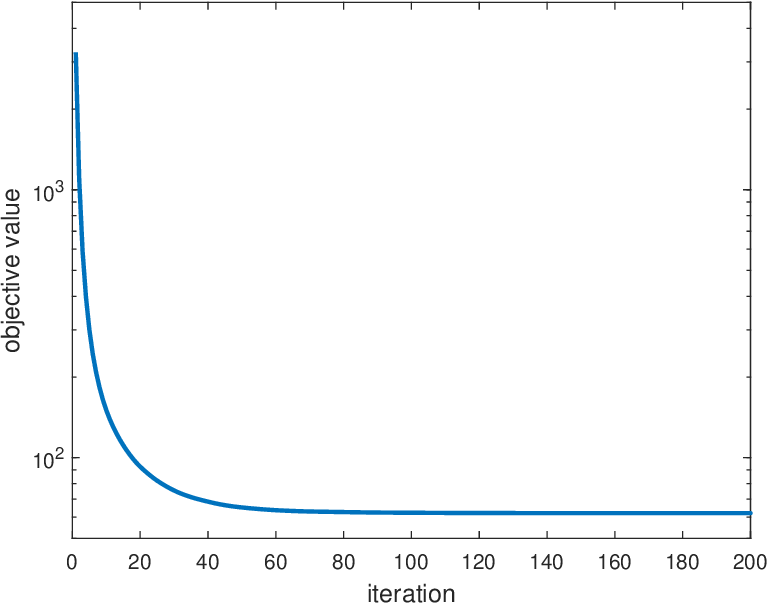}  
			\end{tabular}
		\end{center}
		\caption{The objective function \eqref{eq:general-obj} respect to the iteration counts: $L_1/L_2$ and $L_1/S_K$ for signal recovery (left) and $L_1/L_2$ on the gradient for image recovery (right). The decay in each objective function provides empirical evidence of the convergence of the proposed scheme  \eqref{eq:minimization_flow}.
		}\label{fig:obj}
	\end{figure}

The convergence analysis we conduct in Section \ref{sect: convergence} is based on a modified model \eqref{eq:minimization_flow_new}, as opposed to our numerical scheme \eqref{eq:minimization_flow}. Here we empirically demonstrate the convergence of the latter on the three  quotient models: $L_1/L_2$, $L_1/S_K$, and $L_1/L_2$ on the gradient. The first two models are related to signal recovery, and we choose $K= 100$ for the $L_1/S_K$ model in this experiment, while the last one is stemmed from the image processing literature. The objective function for all these models is expressed in \eqref{eq:general-obj}. We plot the objective value $R(u^k)+\frac{\lambda}{2}\|Au^k-f\|_2^2$ with respect to $k$,
in which $u^k$ is defined by \eqref{eq:minimization_flow}.
As illustrated in Figure \ref{fig:obj}, all the objective curves decrease rapidly, which provides strong evidence that the proposed scheme \eqref{eq:minimization_flow} is convergent. The theoretical analysis of \eqref{eq:minimization_flow} is left for future work.

\begin{figure}
		\begin{center}
			\begin{tabular}{cc}
			   Case 1 &  Case 2 \\
			   % $r=0.1$  & $r=0.8$ \\
				\includegraphics[width=0.42\textwidth]{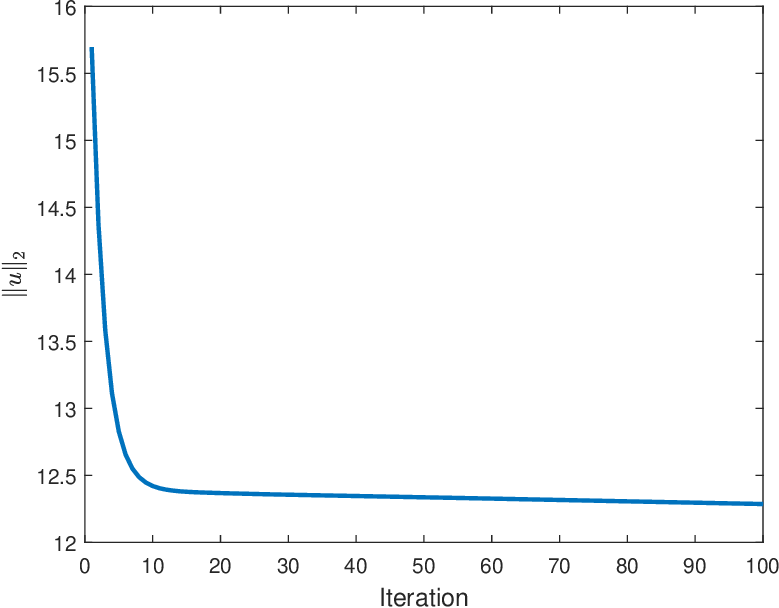} &
			    \includegraphics[width=0.42\textwidth]{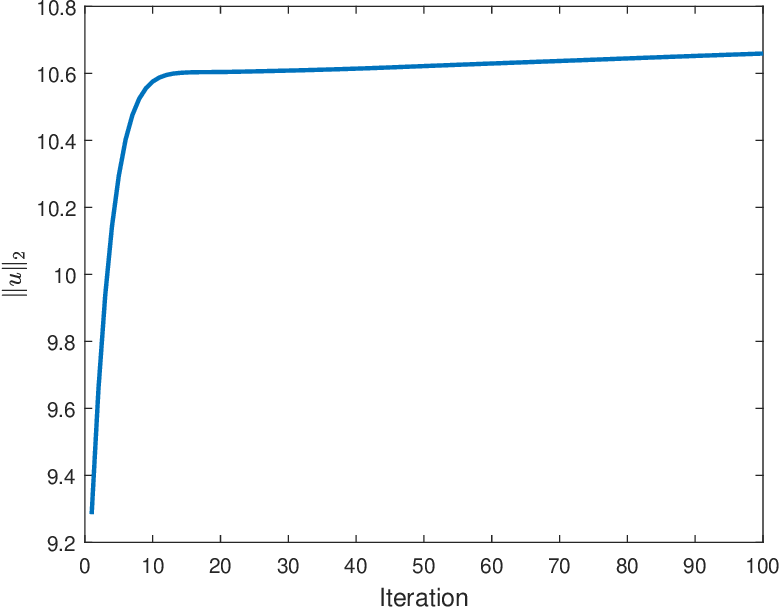}  \\
       \includegraphics[width=0.42\textwidth]{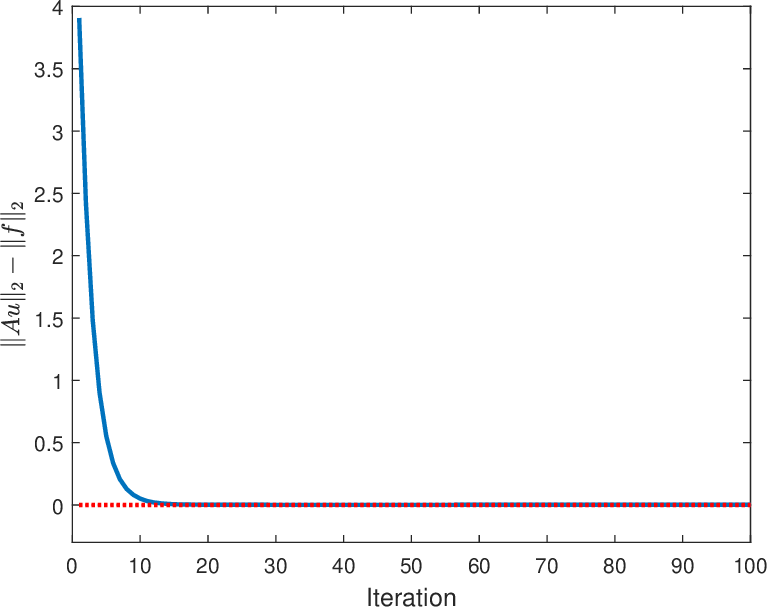} &
			    \includegraphics[width=0.42\textwidth]{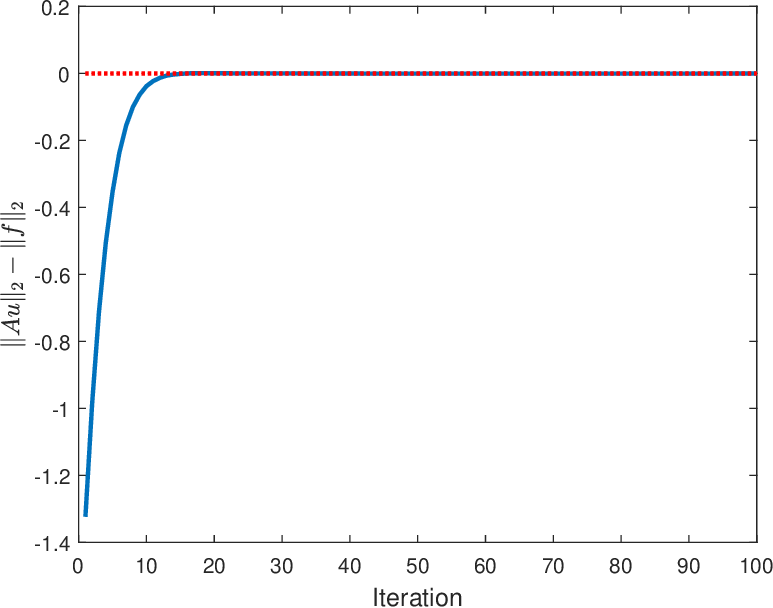}  \\
			\end{tabular}
		\end{center}
		\caption{Numerical verification for Theorem \ref{thm:flow29} based on the $L_1/L_2$ model: if $\|Au\|\geq \|f\|$, then $\|u\|_2$ decreases with respect to the iteration (left); otherwise, $\|u\|_2$ increases (right). We plot $\|u\|_2$ on the top row, while $\|Au\|_2-\|f\|_2$ with a baseline of 0 (red dash line) on the bottom row. 
  % The values  of $\|u\|_2$ and $\|Au\|_2-\|f\|_2$ with each iterations. 
		}\label{fig:unorm}
	\end{figure}

Next, we numerically verify Theorem \ref{thm:flow29} based on the $L_1/L_2$ model. Specifically, we choose an initial guess of $u^0$ such that $\|Au^0\|_2-\|f\|_2$ is strictly larger than 0 as Case 1, and $\|Au^0\|_2-\|f\|_2<0$ as Case 2. We plot $\|u^k\|_2$ and $\|Au^k\|_2-\|f\|_2$ with respect to $k$ in Figure~\ref{fig:unorm}, which validates the decrease in $\|u\|_2$ is attributed to $\|Au^k\|_2\geq \|f\|_2$. 
%\yl{[Fig 3 suggests both necessary and sufficient conditions for the decrease in $\|u^k\|;$ we only prove sufficient in Theorem 3. can we prove for necessity? if not, we may want to add a comment]}

Lastly, we investigate the impact of the parameter $K$ for the $L_1/S_K$ model.  We consider $m=250$ to $360$ with an increment of 10. For each $m$, we generate a random matrix $A$, a ground-truth sparse vector $u$ of $s=130$ nonzero elements, and a noise term $\nu$ to obtain the measurement vector $f$. We conduct  100 random realizations and record   in  Table \ref{Tab:model_k} the average value of MSEs between the the ground-truth $u$ and reconstructed solutions by $L_1/S_K$ with $K=10, 100, 150, n(=512)$. We use the $L_1$ solution as the initial condition for $L_1/S_K$, which is referred to as the baseline model in Table \ref{Tab:model_k}. Notice that for $K= n$, the $L_1/S_K$ model becomes  $L_1/L_2$. 
% %We organize the results in Table \ref{Tab:model_k} 
% into  Figure \ref{fig:model} for the ease of visual inspection. Both
Table~\ref{Tab:model_k} shows that the $L_1/S_K$ model exhibits a close approximation to the oracle performance when $K = 100$ or 150, as the ground-truth sparsity is 130. 
When the parameter $K$ is close to the ground-truth level,  $L_1/S_K$ achieves top-notch performance at any $m$. For a smaller value of $m$, the problem becomes more ill-posed, and  hence all models lead to similar performance. If we choose $K = 10$ (far away from the true sparsity), the performance of $L_1/S_K$ is worse than the $L_1/L_2$ model, which implies that $K$ plays an important role in the success of the $L_1/S_K$ model for sparse recovery.

\begin{table}
		\begin{center}
			% \scriptsize
			\caption{Impact of the parameter $K$ on the sparse recovery via  the $L_1/S_K$ model. The sensing matrix $A$ is of size $m \times n$, where $m$ ranges from 250 to 360 and $n = 512$. The ground-truth sparse vector contains $s=130$ nonzero elements.  Each recorded value is averaged over 100 random realizations. The baseline model refers to the $L_1$ minimization, whose solution serves as the initial condition for $L_1/S_K$. When $K$ is chosen to be close to the true sparsity level (e.g., $K=100, 150$ versus $s=130$), $L_1/S_K$ yields top-notch performance; otherwise (e.g., $K=10$), $L_1/L_2 (K=n)$ is the best.} 
		\begin{tabular}{|l|cccccc|} 
				\hline 
				% \multirow{2}{*}{Image} & \multirow{2}{*}{Line} & \multicolumn{2}{c|}{ZF}& \multicolumn{2}{c|}{$L_1$ } & \multicolumn{2}{c|}{$L_p$ } & \multicolumn{2}{c|}{$L_1$-$\alpha L_2$ } & \multicolumn{2}{c}{$L_1/L_2$ }  \\ \cline{3-12} 
				% &  & PSNR &   RE & PSNR &   RE & PSNR & RE & PSNR &  RE & PSNR &   RE  \\ \hline
			 \diagbox[width=\dimexpr 0.5\textwidth/8+4\tabcolsep\relax, height=0.8cm]{$K$}{$m$}  & 250 & 260 & 270 &280 &290&  300 \\ \hline
		    baseline &  5.27   & 4.97   & 4.59   & 4.44   & 4.20   & 3.91  \\
               10 & 5.20 & 4.87 & 4.44 & 4.19 & 3.93 & 3.69    \\
               100 & 4.95 & 4.57 & 4.12 & \bf 3.80 & 3.56 & \bf 3.29 \\
		      150  & \bf  4.92 & \bf  4.55 & \bf  4.10 &\bf  3.80 & \bf 3.53 & \bf 3.29  \\
        $n$ & 5.01 & 4.65 & 4.19 & 3.90 & 3.65 & 3.43 \\ \hline
	       	 	\hline
                 \diagbox[width=\dimexpr 0.5\textwidth/8+4\tabcolsep\relax, height=0.8cm]{$K$}{$m$}  &310 &320 & 330 &340 &350&   360 \\ \hline
		    baseline  & 3.73   & 3.55   & 3.49   & 3.26   & 3.13   & 3.02 \\ 
                10 & 3.42 & 3.25 & 3.11 & 2.97 & 2.86 & 2.75    \\
                100  & \bf 3.01 & \bf 2.86 & \bf 2.70 & \bf 2.57 & \bf 2.46 & \bf 2.34 \\
		      150  & 3.02 & \bf 2.86 & 2.71 & 2.58 & 2.49 & 2.39    \\ 
        $n$ & 3.16 & 3.04 & 2.91 & 2.81 & 2.73 & 2.65
        \\ \hline
			\end{tabular}\label{Tab:model_k}
			\medskip
		\end{center}
	\end{table}
% \begin{figure}[h]
% \begin{center}
% 			\begin{tabular}{c}
% 			   % signal recovery &  image recovery \\
% 			   % $r=0.1$  & $r=0.8$ \\
%       % \includegraphics[width=0.42\textwidth]{}  &
% 				\includegraphics[width=0.62\textwidth]{}  
% 			    % \includegraphics[width=0.42\textwidth]{}   
% 			\end{tabular}
% 		\end{center}
% 		\caption{The effect of $K$ in the $L_1/S_K$ model: MSE and of sparse recovery under additive Gaussian white noise. Each MSE value is the average of 100 random realizations. \yl{[I'd suggest plot L1/L2 the last to align with Tab 4; or we only need to show either table or fig. otherwise caption might be redundant]}
% 		}\label{fig:model}
% 	\end{figure}
 
\subsection{Signal recovery}\label{sect:exp-signal}

This section investigates the signal recovery problem, in which we compare various algorithms for the QRM model together with 
fractional programming (FP). Specifically for QRM, we compare
the proposed Algorithm \ref{alg:unified_signal}
on both $L_1/L_2$  and $L_1/S_K$ (choosing $K=100$) regularizations with a difference of convex algorithm (DCA) scheme \cite{phamLe2005dc} implemented by ourselves. 
Here DCA aims to minimize $D_1(u)-D_2(u)$ with convex functionals $D_1, D_2$  by iteratively constructing two sequences $\left\{u^{k}\right\}$ and $\left\{v^{k}\right\}$ in the following way,
\begin{equation}\label{eq:dca}
    \left\{\begin{array}{l}
v^{k} \in \partial D_2\left(u^{k}\right) \\
u^{k+1}=\arg \min _{u} D_1(u)-\left\langle u, v^{k}\right\rangle.
\end{array}\right.
\end{equation}
 We consider splitting the objective function \eqref{eq:general-obj} into : 
\begin{equation}
    \begin{split}
        D_1(u) &= \mu \|u\|_1 +  \frac{\lambda}{2} \|Au-  f\|_2,\\
        D_2(u) & = \mu \|u\|_1-R(u).
    \end{split}
\end{equation}
The $u$-subproblem in DCA \eqref{eq:dca} amounts to an $L_1$ regularized problem, which can be solved by ADMM. 

The FP formulation \eqref{eq:l1/Sk-frac} is defined for $L_1/S_K,$ which becomes $L_1/L_2$ for $K=n.$
We compare to a proximal-gradient-subgradient algorithm with backtracked extrapolation (PGSA\_BE) \cite{li2022proximal} for solving \eqref{eq:l1/Sk-frac}.
In addition, we  implement the ADMM  algorithm for the $L_1/L_2$ model under either FP or QRM setting.

We randomly generate the matrix $A$ of size $m\times 512$ for $m$ varying from 240 to 360 with an increment of 20. 
Since the quotient models are non-convex, the choice of initial guess $u^0$ significantly impacts the performance. We adopt the restored solution via the $L_1$ minimization as the initial guess and terminate the iterations when the relative error $\| u^{k+1}-u^{k}\|_2/\|u^{k+1}\|_2$ is less than $10^{-8}$. This 
 stop criterion is used for all the algorithms.
 Table \ref{Tab:algo} reports the averaged MSE values over 100 random realizations.
% Since all these three algorithms work on the same model, their MSE values do not make a big difference. However, 
We observe that the QRM framework always performs better
than FP for the same regularization.
%Notably,  
The $L_1/S_K$ model solved by our algorithm performs the best in all the cases when $K=100$ is chosen near the true sparsity level (130), and $L_1/L_2$ without knowing the sparsity ranks the second best.
In short, the proposed algorithms for solving two QRM models with $L_1/L_2$ and $L_1/S_K$ outperform the other relevant approaches.

\begin{table}[t]
		\begin{center}
			% \scriptsize
			\caption{MSEs of recovering a sparse vector of length $n=512$ with $s=130$ nonzero elements from $m$ noisy measurements ($m=240:20:360$ following the MatLab's notation). We compare $L_1/L_2$ and $L_1/S_K$ for $K=100$ under the settings of FP \eqref{eq:l1/Sk-frac}  and QRM \eqref{eq:l1/Sk}. We observe QRM is a better framework than FB for sparse recovery. The best results are consistently given by the proposed algorithm for solving the $L_1/S_K$ model when the value of $K=100$ is close to the true sparsity level ($130$). The $L_1/L_2$ (when $K=n$) model achieves the second best in performance.
      % Here the baseline is the MSE corresponding to the $L_1$ minimization. \yl{[so this table is L1/L2 not L1/SK?]} 
   } 
		\begin{tabular}{l|l|ccccccccccc} 
				\hline 
				% \multirow{2}{*}{Image} & \multirow{2}{*}{Line} & \multicolumn{2}{c|}{ZF}& \multicolumn{2}{c|}{$L_1$ } & \multicolumn{2}{c|}{$L_p$ } & \multicolumn{2}{c|}{$L_1$-$\alpha L_2$ } & \multicolumn{2}{c}{$L_1/L_2$ }  \\ \cline{3-12} 
				% &  & PSNR &   RE & PSNR &   RE & PSNR & RE & PSNR &  RE & PSNR &   RE  \\ \hline
			& model-algorithm & 240 & 260  &280 &  300  &320 &340 &360\\ \hline
\multirow{3}{*}{FP} & $L_1/L_2$-ADMM    & 5.51 & 4.76 & 4.00 & 3.48 & 3.15 & 2.86 & 2.67   \\
		    % baseline    & 4.97   & 4.59   & 4.44   & 4.20   & 3.91  &   3.73   & 3.55   & 3.49   & 3.26   & 3.13   \\ 
      & $L_1/L_2$-PGSA\_BE  & 11.12 & 8.60 & 6.28 & 4.40 & 3.37 & 2.83 & 2.52\\
      & $L_1/S_K$-PGSA\_BE & 5.82 & 4.92 & 4.05 & 3.44 & 3.02 & 2.77 & 2.60\\ \hline
     \multirow{5}{*}{QRM}          
     & $L_1/L_2$-DCA & 5.56 & 4.87 & 4.14 & 3.61 & 3.27 & 2.95 & 2.69\\ 
              & $L_1/L_2$-ADMM  & 5.53 & 4.75 & 3.96 & 3.45 & 3.12 & 2.86 & 2.68
              \\
              & $L_1/L_2$-proposed & 5.50 & 4.70 & 3.92 & 3.40 & 3.07 & 2.81 & 2.64 \\
          & $L_1/S_K$-DCA & 5.52 & 4.77 & 4.01 & 3.48 & 3.15 & 2.86 & 2.67     \\
		     & $L_1/S_K$-proposed   & {\bf 5.44}  & {\bf 4.65} & {\bf 3.83}  & {\bf 3.26} & {\bf 2.91} & {\bf 2.57} & {\bf 2.33}    \\ \hline

			\end{tabular}\label{Tab:algo}
			\medskip
		\end{center}
	\end{table}

% \begin{table}[h]
		
% 			\caption{MSE of sparse recovery via different algorithms on the $L_1/L_2$ model under the presence of additive Gaussian white noise. The sensing matrix is of size $m \times n$, where m ranges from 240 to 350 and $n = 512$. The ground-truth sparse vector contains 130 nonzero elements. Here the baseline is the MSE corresponding to the $L_1$ minimization. } 
%    \begin{center}
% 			\scriptsize
% 		\begin{tabular}{c|ccccccccccccc} 
% 				\hline \hline
% 			 $m$	& 240  & 250  &  260  &  270  &  280 &   290 &   300  &  310  &  320 &   330  &  340 &   350  &  360
% 				\\ \hline \hline
%      baseline & 5.63   & 5.27   & 4.97   & 4.59   & 4.44   & 4.20   & 3.91   & 3.73   & 3.55   & 3.49   & 3.26   & 3.13   & 3.02   \\ \hline
%     DCA & 5.55   & 5.13   & 4.76   & 4.35   & 4.18   & 3.89   & 3.64   & 3.39   & 3.17   & 3.13   & 2.90   & 2.78   & 2.65   \\ \hline
%    ADMM  & 5.52   & 4.99   & 4.67   & 4.18   & 4.01   & 3.73   & 3.47   & 3.27   & 3.06   & 2.98   & 2.85   & 2.73   & 2.63              \\ \hline
%    unified &  {\bf 5.48}   &  {\bf 4.96}   &  {\bf 4.63}   &  {\bf 4.12}   &  {\bf 3.96}   &  {\bf 3.67}   &  {\bf 3.41}   &  {\bf 3.20}   &  {\bf 3.00}   &  {\bf 2.93}   &  {\bf 2.79}   &  {\bf 2.68}   & {\bf 2.58}   \\ \hline \hline
%    % oracle & 4.53   & 4.15   & 3.85   & 3.57   & 3.32   & 3.12   & 2.92   & 2.77   & 2.62   & 2.48   & 2.37   & 2.26   & 2.16   \\ \hline
% 			\end{tabular}\label{Tab:algo}
% 			\medskip
% 		\end{center}
% 	\end{table}

% \clearpage

\subsection{Image recovery}\label{sect:exp-image}

We consider an MRI reconstruction as a proof-of-concept example in image processing. 
 The MRI measurements are acquired through multiple radical lines in the frequency domain, achieved by performing the Fourier transform.
 In addition, we add the Gaussian noise, with a mean of zero and standard deviation $\sigma$ on the MRI measurements. 
 Intuitively, fewer radial lines and a larger $\sigma$ value bring more ill-posedness and difficulty to the problem. 
 Here we consider two standard phantoms, namely Shepp–Logan (SL) phantom  generated using MATLAB's built-in command {\tt phantom} and the FORBILD (FB) phantom \cite{FB_ph}. 
 % \yl{talk about two phantoms}
 We evaluate the performance in terms of the relative error (RE) and the peak signal-to-noise
ratio (PSNR), defined by
\begin{equation*}
		\text{RE}( u^\ast,\tilde{ u}) := \frac{\|u^\ast-\tilde{ u}\|_2}{\|\tilde{ u}\|_2} \quad \text{and} \quad 	    {\rm PSNR}( u^\ast, \tilde{ u}) := 10 \log_{10} \frac{N P^2}{\| u^\ast-\tilde{ u}\|_2^2},
	\end{equation*} 
	where $ u^\ast$ is the restored image, $\tilde{ u}$ is the ground truth, and $P$ is the maximum peak value of $\tilde{u}. $
% \yl{Tab is RMSE; same as RE, right?}

Similar to the signal-recovering experiments, we regard the performance of the $L_1$ on the gradient, i.e., the total variation (TV), as the baseline. 
For $L_1/L_2$ on the gradient, we compare the proposed  algorithm to a previous method based on ADMM  \cite{rahimi2019scale}. For three sampling schemes (7, 10, and 13  lines) and two noise levels ($\sigma = 0.01$ and 0.05), we record RE and PSNR values of three methods in Table \ref{Tab:MRI}, demonstrating significant improvements in the accuracy of the proposed approach over the previous works. 

Figures \ref{fig:phantomSL} and \ref{fig:phantomFB} present visual reconstruction results of the SL phantom and the FB phantom, respectively, both under high additive Gaussian noise ($\sigma=0.05$). In particular, Figure \ref{fig:phantomSL} is to recover the SL phantom using 7 radial lines. The $L_1$ model has severe streaking artifacts due to this extremely small number of data obtained on the radial lines. The $L_1/L_2$ minimization on the gradient yields significant improvements over the baseline model (TV). The proposed algorithm outperforms the previous ADMM approach at the outer ring and boundaries of the three middle oval shapes, which are more obvious in the difference map to the ground truth.
On the other hand, the FB phantom has finer structures and lower image contrast compared to the SL phantom. As a result, it requires 13 radial lines for a reasonable reconstruction. As we observe in Figure \ref{fig:phantomFB}, the overall geometric shapes are preserved. At the same time, many speckle artifacts appear in the reconstructed images by $L_1/L_2$ no matter which algorithm is used.

\begin{table}[h]
		\begin{center}
			\scriptsize
			\caption{MRI reconstruction from   different numbers of radial lines and different noise levels. }
   % \yl{table is too long, maybe only PSNR, remove RMSE}  } 
		\begin{tabular}{|c|c|c|cc|cc|cc|} 
				\hline 
				\multirow{2}{*}{Image} & \multirow{2}{*}{$\sigma$} & \multirow{2}{*}{Line} & \multicolumn{2}{c|}{$L_1$}& \multicolumn{2}{c|}{$L_1/L_2$-ADMM } & \multicolumn{2}{c|}{$L_1/L_2$-proposed }  \\ \cline{4-9} 
				& & &   RE & PSNR    & RE & PSNR    & RE & PSNR  \\ \hline
			\multirow{6}{*}{SL} & \multirow{3}{*}{0.01}& 7 &  46.06\% & 19.50   & 25.36\% & 24.09  &  {\bf 3.74\%}  &  {\bf 40.72}  \\ \cline{3-9} 
				& &  10  & 16.29\% & 28.66  & 3.41\% & 41.53 
    &  {\bf 2.91\% } &  {\bf 42.90}    \\ \cline{3-9}   
				& &  13 &   6.85\% & 36.52  & 1.91\% & 46.55  &  {\bf 1.71}\% &  {\bf 47.49}  
				 \\ \cline{2-9} 
					& \multirow{3}{*}{0.05}
     & 7 &  52.31\% & 18.33   & 43.63\% & 19.38 &  {\bf 31.90\%}  &  {\bf 22.10}   \\ \cline{3-9} 
     & & 10 &  33.09\%   & 22.42 & 14.34\% & 29.04 &   {\bf 14.08\%} &  {\bf 29.24}   \\ \cline{3-9} 
				& &  13  & 22.67\% & 26.10 
    &   10.50\%   &   31.75 &   {\bf 10.41\%} & {\bf 31.82}  \\ \hline
    		\multirow{6}{*}{FB} & \multirow{3}{*}{0.01}& 7 &  21.63\% & 21.49   & 13.80\% & 24.89   &  {\bf 1.11\%}  &  {\bf 26.94}  \\ \cline{3-9} 
				& & 10 &  18.14\%   & 23.08 & 14.98\% & 24.17 &    {\bf 12.90\%} &  {\bf 25.47}  \\ \cline{3-9}  
				& &  13 &   9.51\% & 28.29 &  1.41\% & 44.71 &   {\bf 1.17}\% &  {\bf 46.31}  
				 \\ \cline{2-9} 
					& \multirow{3}{*}{0.05}
     & 7 &  26.03\% & 19.9   & 22.14\% & 20.78 &   {\bf 16.50\%}  &  {\bf 23.36}  \\ \cline{3-9} 
     & & 10 &  18.14\%   & 23.08 & 14.98\% & 24.17 &    {\bf 12.90\%} &  {\bf 25.47}  \\ \cline{3-9} 
				& &  13  & 14.48\% & 24.79 
    &   12.67\%   &   25.64   & {\bf 12.30\%} & {\bf 25.89}  \\ \hline
    % \cline{2-11}   
				% &  16 &   15.08\% & 29.83 & 14.10 &  {\bf 8.08\%} &  {\bf 34.02} &   8.14  & 10.12\% & 32.07 & 18.24 \\ 
			\end{tabular}\label{Tab:MRI}
			\medskip
		\end{center}
	\end{table}

\begin{figure}[h]
		\begin{center}
			\begin{tabular}{ccc}
				%		Sinogram & $L_1$-grad (RE $= 0.72\%$) & $L_1/L_2$-grad (RE $= 0.04\%$)\\
				$L_1$ & $L_1/L_2$-ADMM & $L_1/L_2$-proposed \\
				
    \includegraphics[width=0.305\textwidth]{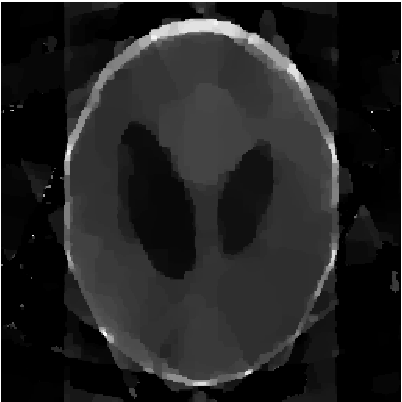} &
				\includegraphics[width=0.305\textwidth]{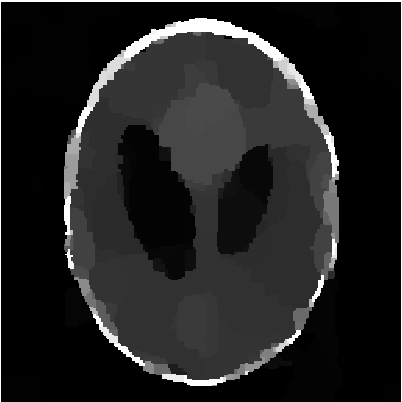} &
				\includegraphics[width=0.305\textwidth]{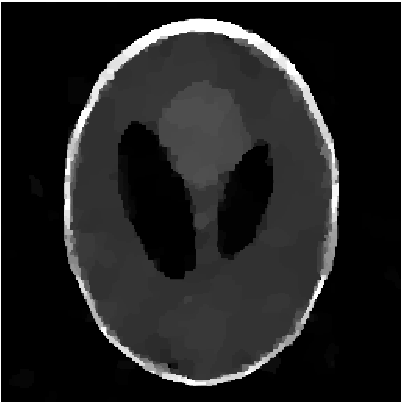} \\
        \includegraphics[width=0.305\textwidth]{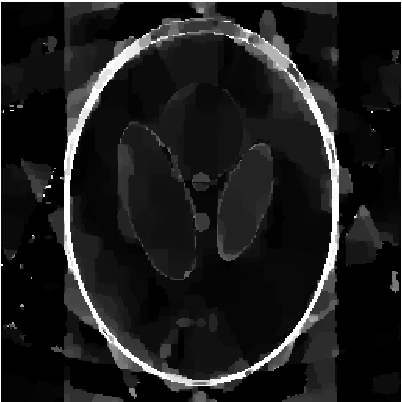} &
				\includegraphics[width=0.305\textwidth]{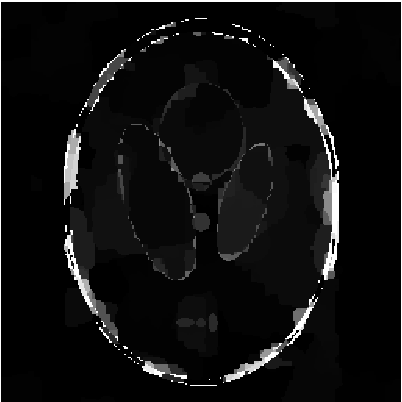} &
				\includegraphics[width=0.305\textwidth]{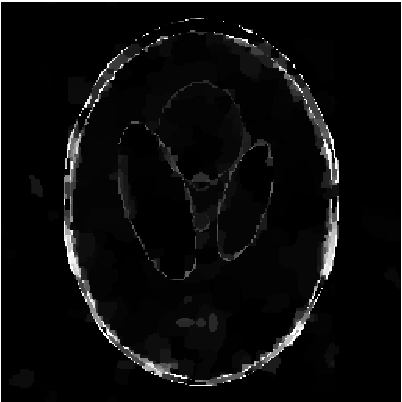} \\
			\end{tabular}
		\end{center}
		\caption{MRI reconstruction on 
  % Top is for the noise level 1\% with seven radial lines; the bottom is for the noise level 5\% with ten radial lines. 
  the SL phantom with a noise level of $\sigma=0.05$ with 7 radial lines. 
  Top row -- reconstruction results, bottom row -- difference from ground truth.
  The proposed algorithm outperforms the previous ADMM approach at the outer ring and boundaries of the three middle oval shapes, better seen in the difference map.
  }\label{fig:phantomSL}
	\end{figure}

\begin{figure}[h]
		\begin{center}
			\begin{tabular}{ccc}
				%		Sinogram & $L_1$-grad (RE $= 0.72\%$) & $L_1/L_2$-grad (RE $= 0.04\%$)\\
				$L_1$ & $L_1/L_2$-ADMM & $L_1/L_2$-proposed \\

    \includegraphics[width=0.305\textwidth]{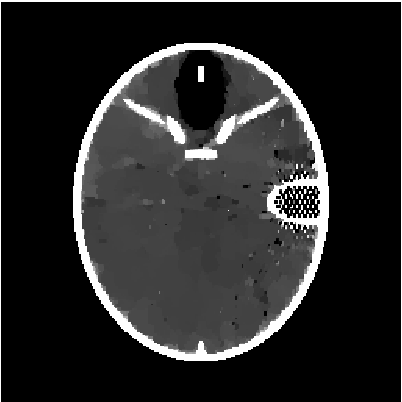} &
				\includegraphics[width=0.305\textwidth]{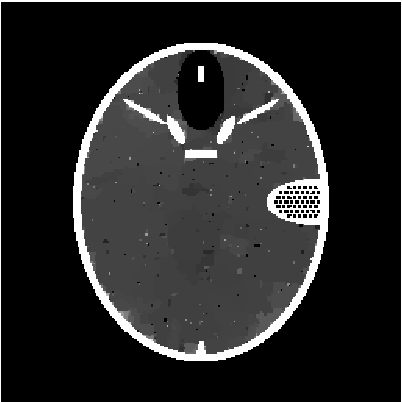} &
				\includegraphics[width=0.305\textwidth]{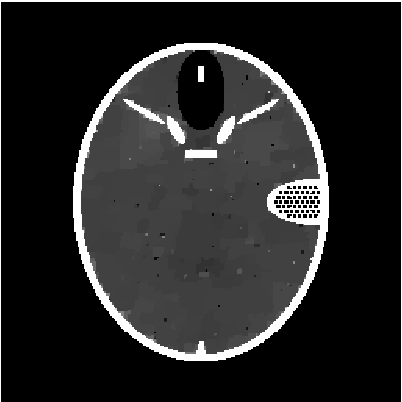} \\
        \includegraphics[width=0.305\textwidth]{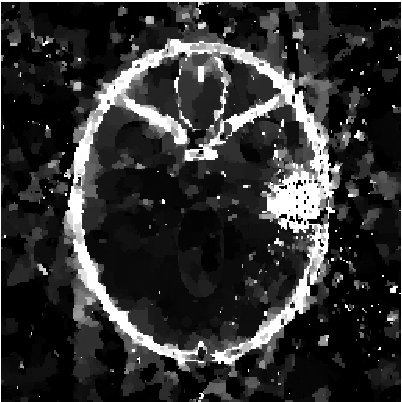} &
				\includegraphics[width=0.305\textwidth]{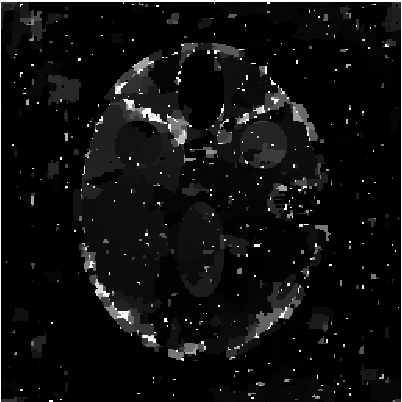} &
				\includegraphics[width=0.305\textwidth]{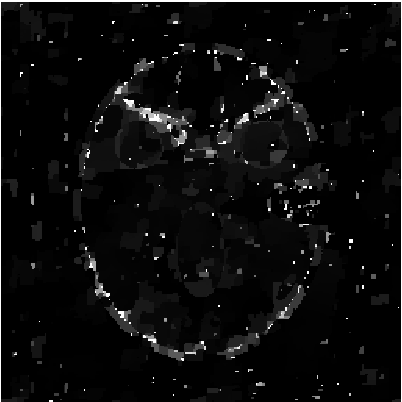} 
			\end{tabular}
		\end{center}
		\caption{MRI reconstruction on the  FB phantom with a noise level of $\sigma=0.05$ with 13 radial lines. Top row -- reconstruction results, bottom row -- difference from ground truth.  The proposed algorithm is able to better preserve the overall geometric shapes, compared to competing methods. 
  }\label{fig:phantomFB}
	\end{figure}

\section{Conclusions}
\label{sect:conclusion}
In this paper, we proposed a gradient descent flow to minimize a quotient regularization model with a quadratic data fidelity term for signal and image processing applications. We assumed the numerator and the denominator in the quotient model are absolutely one homogeneous, which enables us to establish the convergence in a continuous formulation. By taking the implementation details into consideration, we adopted a slightly different discretized scheme to the one we analyze theoretically. The proposed algorithm amounts to solving a convex problem iteratively. Experimentally, we presented the comparison results of three case studies of $L_1/L_2$ and $L_1/S_K$ for signal recovery and $L_1/L_2$ on the gradient for MRI reconstruction. We demonstrated that the proposed algorithm significantly outperforms the previous methods in each case in terms of accuracy. Future work includes the speed-up of the proposed algorithm, e.g., trying to make a single loop rather than the double loop, and the convergence analysis of the actual scheme.

\begin{acknowledgements}
 C.~Wang was partially supported by the Natural Science Foundation
of China (No. 12201286), HKRGC Grant No.CityU11301120, and the Shenzhen Fundamental Research Program JCYJ20220818100602005. Y.~Lou was partially supported by NSF CAREER award 1846690.
J-F.~Aujol and G.~Gilboa acknowledge the support of the European Union’s Horizon 2020 research and innovation program under the Marie Sklodowska-Curie grant agreement No777826. G.~Gilboa acknowledges support by ISF grant 534/19. This work was initiated while J-F.~Aujol and Y.~Lou were visiting the Mathematical Department of UCLA.
\end{acknowledgements}

\section*{Data Availability}
The MATLAB codes and datasets generated and/or analyzed during the current study  will be available after publication.

\section*{Declarations}
The authors have no relevant financial or non-financial interests to disclose.  The authors declare that they have no conflict of interest.

% Authors must disclose all relationships or interests that 
% could have direct or potential influence or impart bias on 
% the work: 
%
% \section*{Conflict of interest}
%
% The authors declare that they have no conflict of interest.

% BibTeX users please use one of
%\bibliographystyle{spbasic}      % basic style, author-year citations
\bibliographystyle{spmpsci}      % mathematics and physical sciences
\bibliography{refer_l1dl2}

% % Non-BibTeX users please use
% \begin{thebibliography}{}
% %
% % and use \bibitem to create references. Consult the Instructions
% % for authors for reference list style.
% %
% \bibitem{RefJ}
% % Format for Journal Reference
% Author, Article title, Journal, Volume, page numbers (year)
% % Format for books
% \bibitem{RefB}
% Author, Book title, page numbers. Publisher, place (year)
% % etc
% \end{thebibliography}

\end{document}